\newtheorem{theorem}{Theorem}
\newtheorem{lemma}[theorem]{Lemma}
\newtheorem{proposition}[theorem]{Proposition}
\newtheorem{corollary}[theorem]{Corollary}
\newtheorem{remark}[theorem]{Remark}
\theoremstyle{definition}
\newcommand{\osc}[1]{{\underset{#1}{\mathrm{osc}}}}
\newcommand{\supp}{\operatorname{supp}}
\newcommand{\R}{\mathbb{R}}
\renewcommand{\d}{\mathrm{d}}
\renewcommand{\L}{\mathcal{L}}
\newcommand{\A}{\mathcal{A}}
\newcommand{\abs}[1]{\lvert#1\rvert}
\newcommand{\norm}[1]{\lvert\lvert#1\rvert\rvert}
\title{Symmetry breaking in indented elastic cones}
\date{December 22, 2015}
\author[S. Conti] {Sergio Conti}
\address[Sergio Conti]{Institut f\"ur Angewandte Mathematik, Universit\"at Bonn, Germany}
\email{sergio.conti@uni-bonn.de}
\author[H. Olbermann] {Heiner Olbermann}
\address[Heiner Olbermann]{Hausdorff Center for Mathematics, Bonn, Germany}
\email{heiner.olbermann@hcm.uni-bonn.de}
\author[I. Tobasco] {Ian Tobasco}
\address[Ian Tobasco]{Courant Institute of Mathematical Sciences, New York, USA}
\email{tobasco@cims.nyu.edu}
\begin{document}
\maketitle

\begin{abstract}
Motivated by simulations of carbon nanocones (see Jordan and  Crespi, Phys. Rev. Lett., 2004), 
we consider  a variational plate model for an elastic cone under
compression in the direction of the cone symmetry axis. Assuming radial
symmetry, and modeling the compression by suitable Dirichlet boundary
conditions at the center and the boundary of the sheet,  we identify the  energy scaling law in the von-K\'arm\'an plate
model. Specifically, we find that three different regimes
arise with increasing indentation $\delta$: initially the energetic cost
of the logarithmic singularity dominates, then there is a linear response corresponding
to a moderate deformation close to the boundary of the cone, and for larger $\delta$ a localized
inversion takes place in the central region.
Then we show that for large enough indentations  minimizers
 of the elastic energy cannot be radially symmetric. We do so by an explicit construction that achieves
lower elastic energy than the minimum amount possible for radially symmetric deformations. 
\end{abstract}

\section{Introduction}
Graphene, a single layer of carbon atoms  with a honeycomb two-dimensional lattice structure,
exhibits  unique properties and provides challenges in both experimental and fundamental sciences \cite{dresselhaus1997future,geim2009graphene,guo2011graphene}.
Recently, carbon nanocones 
with a defect in the lattice structure which generates a point singularity in the intrinsic curvature
have been produced.
They can be thought of as the result of removing  a wedge from a graphene sheet \cite{GE1994192,krishnan1997graphitic,Shenderova2001}. 
Atomic-scale simulations of indentation tests on carbon nanocones have shown a variety of response regimes with
increasing 
compression in the direction of the symmetry axis of the cone  \cite{PhysRevLett.93.255504}. In particular,
the simulation has shown that  when pressing the cone at its center with a small spherical
indenter, the configurations with (almost) minimal energy were radially symmetric.
We investigate these deformations from the viewpoint of the theory of elasticity. 
 As is true for graphene sheets and carbon nanotubes,
the qualitative behavior of the nanocone can be described by continuum elastic plate
models \cite{lee2008measurement,lu2009elastic}, although deviations have been observed in fine-scale experiments
where the  atomic structure becomes relevant \cite{yakobson1996nanomechanics}. In this paper, we focus on the von K\'arm\'an model,
which is appropriate for cones with small angular deficit and is expected to provide a qualitatively correct description also at angular deficit
of order 1.

The study of elastic deformation of thin elastic objects is an important field in mechanics and applied mathematics. 
On the one hand, remarkable successes have been obtained in the  derivation of reduced models from three-dimensional elasticity 
in the limit of small film thickness
\cite{ciarlet1980justification,CiarletII,FJM2002,FJM2006,LedretRaoult1995}. 
On the other hand, many insights on the behavior of compressed sheets have been obtained from a qualitative analysis of the variational formulation of elasticity.
The possibility of crumpling shows that models which properly reflect the invariance under rigid-body rotations are nonconvex, and
in many physical problems one explores a regime in which fine structures appear. 
An understanding of the qualitative properties of low-energy states can often be obtained by determining the scaling
of the minimum energy with respect to the relevant parameters, e.g.\ the film thickness or the magnitude of applied forcing. 
This approach started with the work by Kohn and M\"uller on shape memory alloys  \cite{KM94,KM92}, and was then generalized to
other mechanical problems, such as micromagnetism \cite{CKO99} and flux penetration patterns in superconductors \cite{CKO04,CCKO08}.
For the case of flat elastic sheets, it is known that uniform compression with Dirichlet boundary conditions leads
to an energy per unit volume proportional to the film thickness, $h$, with fold patterns which refine close to the boundary \cite{BCDM00,BCDM02,JinSternberg2}.
If instead one only imposes confinement to a small volume, without boundary conditions, it is known that the minimum energy is bounded from above
by a constant multiple of $h^{5/3}$ \cite{Maggi08,Venkataramani04,RevModPhys.79.643}. For conically constrained sheets, the minimal energy is proportional to 
$h^2\log 1/h$ \cite{BrandmanKohn2013,MuellerOlbermann2014}.

In this paper, we study the energetics of an elastic cone within the von K\'arm\'an plate model. Assuming radial 
symmetry, we show that three different regimes arise, with energy scaling  as $h^2\log 1/h$, $\delta^2 h^{1/2}$, or $\delta^{1/2}h^{3/2}$
respectively, where $h$ is the film thickness and $\delta$ is the indentation
(see Theorem \ref{thm:main} for the precise statement). The first expression is the one characteristic of the undisturbed cone, the second corresponds to a
moderate deformation close to the outer boundary of the cone, and the third one to an inversion in the central part of the cone,
see Section \ref{secUB} for a more detailed discussion.
We also prove that, if the radial symmetry assumption is removed, the energy required for large indentations decreases; specifically, we give an upper bound for the minimum energy that is proportional to $h^{5/3}$ (see Theorem
\ref{thm:ridgeconst} for the precise statement). The corresponding patterns are regularizations of an
``inverted pyramid'', see Figure \ref{fig:pyramid}.
We use a square symmetry in our explicit construction for simplicity, but expect that  a similar construction would be 
doable with minor changes for any regular polygon, and would also result in an energy scaling as $h^{5/3}$.
Such constructions are quite similar to the shape one observes when indenting a cone made out of paper, see Figure \ref{fig:indpapercone}, and  are  reminiscent of experimental observations in the buckling of spheres, see for example \cite{ruan2006crushing}.

The resulting picture is that, with increasing $\delta$, the indented cone undergoes several phase transformations. 
For $0\le \delta\lesssim h^{2/3}$ the deformation is localized around the outer boundary, and the response is essentially linear. 
In the regime $h^{2/3}\lesssim\delta\lesssim h^{1/3}$, buckling in the central region occurs, and
the cylindrical symmetry is preserved (``conical inversion''). For $\delta\gtrsim h^{1/3}$, the cylindrical symmetry is broken, and 
point singularities arise (``pyramidal inversion''). We remark, however, that our lower bounds only hold in the
restricted geometry with cylindrical symmetry. The upper bounds have no such restriction and can, indeed, be extended
to fully nonlinear models without major modifications.

\begin{figure}
\begin{center}
\includegraphics[width=.5\textwidth]{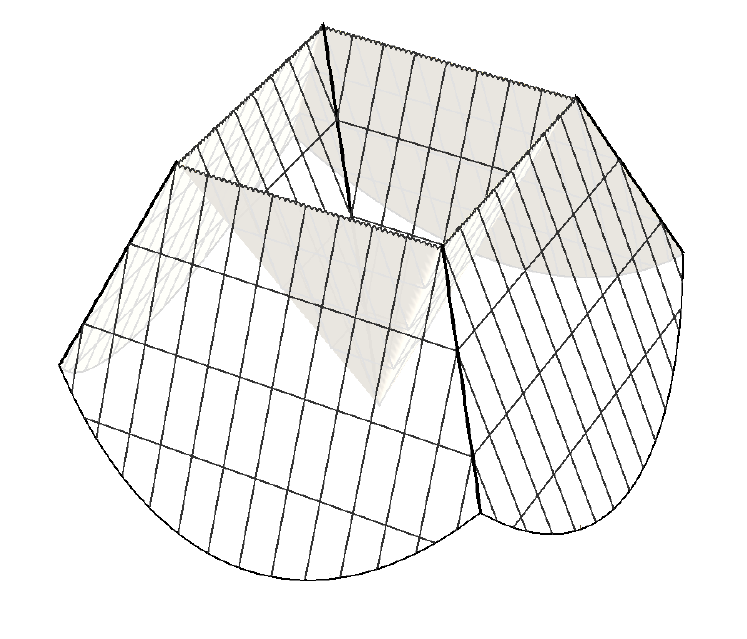}
 \caption{Sketch of the inverted pyramid used to achieve the $h^{5/3}$ upper bound. \label{fig:pyramid}}
\end{center}
\end{figure}

\begin{figure}[h]
\begin{center}
\includegraphics[width=.7\textwidth]{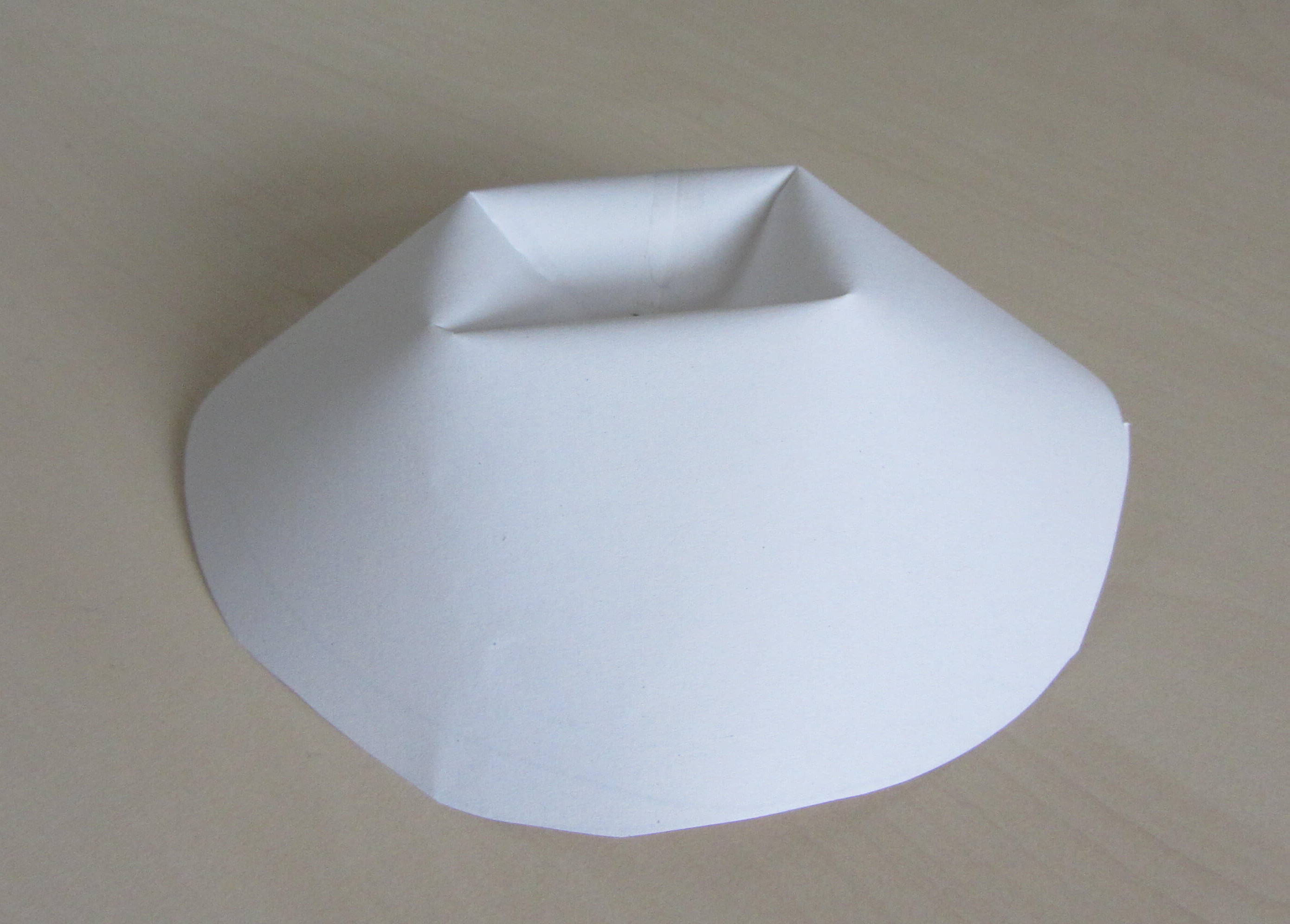}
\caption{An indented paper cone. \label{fig:indpapercone}}
\end{center}
\end{figure}

Precisely, the energy functional we study is given by 
$\tilde E_h:W^{1,2}(B_1;\R^2)\times W^{2,2}(B_1)\to \R$,
\begin{equation}
\tilde E_h(U,W)=\int_{B_1}\left|DU+DU^T +DW\otimes DW+\hat x^\bot\otimes
  \hat x^\bot\right|^2+h^2|D^2W|^2\d x\,,\label{eq:17}
\end{equation}
where  $\hat
x^\bot=(-x_2,x_1)/|x|$
and $B_1=\{x\in\R^2:|x|<1\}$. This expression  is obtained from the usual 
von K\'arm\'an  energy by rescaling and subtracting $\frac12\theta x^\perp$ from $U$, which corresponds within the linear
theory to the natural change of variables performed after removing the wedge from the plane, as discussed also at the beginning of Section \ref{secproof2}.
\\
Motivated by the results from \cite{PhysRevLett.93.255504}, we first
restrict ourselves to radially symmetric
configurations, and model the indentation by 
Dirichlet boundary conditions at the center and the
boundary. We consider the set of radially symmetric configurations given by
\begin{equation}\label{eq:changeofvar}
U(x)=\frac12\left(u(|x|)-|x|\right)\frac{x}{|x|}\hskip1cm \text{ and } \hskip1cm W(x)=w(|x|)\,,
\end{equation}
where 
the pair of scalar functions $(u,w)$ is taken from the set
\begin{equation}\label{eq:Adelta}
\A_\delta=\{(u,w) : (U,W)\in W^{1,2}(B_1;\R^2)\times W^{2,2}(B_1),
w(0)=0, w(1)=1-\delta\}.
\end{equation}
This definition makes sense: in particular, that $w$ has well-defined trace at $r=0,1$ follows since $w\in W^{1,2}((0,1))$. This is proved in Lemma \ref{lem:radsymmspace} in the appendix, which makes clear the exact regularity we are assuming on $u,w$.

Under these assumptions, and dropping an irrelevant factor of $2\pi$, the functional \eqref{eq:17} reduces to
$E_h:\A_\delta\to \R$,
\begin{equation}
E_h(u,w)=\int_0^1  \frac{u^2}{r}+r\left(u'+w'^2-1\right)^2+h^2
\left(rw''^2+\frac{w'^2}{r}\right)\d r\,.\label{eq:2}
\end{equation}
Our first result is the following. 
\begin{theorem}  
  \label{thm:main}
 There exists a numerical constant $C>0$ such that
\[
\begin{split}
  \frac{1}{C}
  \Big(h^2{\log\frac1h}+&\min\left\{\delta^2h^{1/2},\delta^{1/2}h^{3/2}\right\}\Big)\\
\leq&
  \min_{\A_\delta}E_h\leq {C}
  \left(h^2{\log\frac1h}+\min\left\{\delta^2h^{1/2},\delta^{1/2}h^{3/2}\right\}\right)
\end{split}
\]
for all $h\in (0, 1/2]$, $\delta\in [0,1]$.
\end{theorem}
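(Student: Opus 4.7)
I would establish the upper and lower bounds separately, building explicit competitors for the former and deriving quantitative coupled inequalities for the latter.

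For the upper bound I would construct piecewise-defined test functions in $\mathcal{A}_\delta$. A baseline $h^2\log(1/h)$ contribution is obtained from a smoothed cone: take $w(r)=r$ for $r\ge h$, interpolate $w'$ from $0$ (forced by the $W^{2,2}$ regularity of $W$, per Lemma~\ref{lem:radsymmspace}) to $1$ on a core $[0,h]$, and set $u\equiv 0$; then the term $h^2(w')^2/r$ in \eqref{eq:2} integrates to $\sim h^2\log(1/h)$, with all other terms of order $h^2$. To enforce $w(1)=1-\delta$, I would overlay one of two perturbations. The first is a boundary layer of width $\ell$ at $r=1$, with $u$ chosen to neutralize the leading order of $(w')^2-1$; balancing bending $\sim h^2\delta^2/\ell^3$ against stretching $\sim\delta^2\ell$ gives $\ell\sim h^{1/2}$ and energy $\sim\delta^2 h^{1/2}$. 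The second is a central inversion: set $w'=-2$ on an inner disc of radius $r_d=\delta/3$ connected to the outer cone ($w'=+1$) through a ridge of width $\ell$ around $r_d$. The critical observation is that the specific choice of inverted slope $-2$ makes $\int(1-(w')^2)\,\d r$ vanish across any linear ridge joining $-2$ to $+1$, so $u$ can be confined to the ridge without outer transport; balancing ridge bending $\sim h^2 r_d/\ell$ against ridge stretching $\sim \ell^3/r_d$ then gives $\ell\sim h^{1/2}\delta^{1/2}$ and energy $\sim h^{3/2}\delta^{1/2}$. Summing the baseline with the better of the two perturbations yields the upper bound.

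For the lower bound I would proceed in two stages. First, the $h^2\log(1/h)$ term: the $W^{2,2}$ regularity of $W$ forces $w'(0)=0$, while the membrane and stretching penalties prevent $(w')^2$ from being far from $1$ on a definite portion of $[0,1]$; then $h^2\int (w')^2/r\,\d r$ yields a logarithmic lower bound with the inner cutoff at $r\sim h$ determined by the bending-membrane balance. Second, for the $\min\{\delta^2h^{1/2},\delta^{1/2}h^{3/2}\}$ term, I would exploit the boundary condition in the form $\int_0^1 w'\,\d r = 1-\delta$ together with weighted Hardy and Gagliardo--Nirenberg-type inequalities coupling the bending, stretching, and membrane energies. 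A case split according to whether $(w')^2$ remains close to $1$ everywhere (linear regime, yielding $\delta^2 h^{1/2}$ via linearization) or deviates substantially on some radial set (inversion regime, yielding $\delta^{1/2}h^{3/2}$ via the nonlinear trade-off) should produce both scalings.

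The main obstacle is the inversion-regime lower bound. The nonconvex term $(u'+(w')^2-1)^2$ must be handled carefully; extracting the sharp power $\delta^{1/2}h^{3/2}$ likely requires a scale-by-scale or dyadic decomposition in $r$ that identifies a putative ridge radius $r_\ast$, followed by sharp interpolation between $h^2\int r(w'')^2\,\d r$ and $\int u^2/r\,\d r$---essentially mirroring the balance $h^2 r_\ast/\ell$ versus $\ell^3/r_\ast$ from the construction, now as a rigorous inequality valid for arbitrary $(u,w)\in\mathcal{A}_\delta$.
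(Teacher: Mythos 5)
Your overall architecture (two explicit upper-bound constructions plus a case-split lower bound built on a two-well analysis, dyadic decomposition and interpolation inequalities) matches the paper's. But there is a concrete error in your large-$\delta$ construction that breaks the upper bound in exactly the regime where that construction is needed.

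You invert with slope $-2$ on the inner disc $[0,r_d]$, $r_d=\delta/3$, so that a \emph{linear} ridge from $-2$ to $+1$ has $\int(1-(w')^2)\,\d t=0$ (your computation of that integral is correct). The problem is the bulk of the inner disc, not the ridge: there $(w')^2-1=3$, so the integrand $\frac{u^2}{r}+r(u'+(w')^2-1)^2$ cannot be made small. If you set $u\equiv0$ on $[0,r_d]$ you pay $\int_0^{r_d}9r\,\d r\sim\delta^2$ from the stretching term; if instead you choose $u'=-3$ to kill the stretching term, then $|u|\sim 3r$ and you pay $\int_0^{r_d}u^2/r\,\d r\sim\delta^2$ from the membrane term. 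No intermediate choice helps: the Poincar\'e-type argument of Lemma \ref{lem:non-local-lb} shows that on each dyadic interval $[a,2a]\subset[0,r_d]$ one has $\int(u^2/r+ru'^2)\gtrsim a^{-1}\int|g_a|^2\gtrsim a^2$ whenever $1-(w')^2\equiv-3$ there, and summing gives $\gtrsim r_d^2\sim\delta^2$. Since $\delta^2>\delta^{1/2}h^{3/2}$ precisely when $\delta>h$, your construction never beats the linear (boundary-layer) one and the scaling $\delta^{1/2}h^{3/2}$ is not achieved. The fix is to invert with slope $-1$ (so the two membrane terms vanish identically with $u\equiv0$ in the core), take inner radius $R=\delta/2$, and pay for the zero-mean condition on the ridge by using a non-linear profile: a mollification of $|x|-1$ plus a compensating negative bump so that $\int(|W_0'|^2-1)=0$; this is Lemma \ref{lem:UBi}.

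Two smaller points. First, in your conical lower bound the inner cutoff $r\sim h$ is not quite enough: if $w'$ first leaves the wells at some $\tau\in[h,\,h\log^2\frac1h]$, the inversion estimate only yields $\min\{\tau^{1/2}h^{3/2},\tau^2\}\gtrsim h^2$, missing the logarithm; one needs the cutoff $\tau_h=h(\log\frac1h)^2$ as in Lemma \ref{lem:conicalLB}. Second, your lower-bound discussion for the $\min\{\delta^2h^{1/2},\delta^{1/2}h^{3/2}\}$ term is a plan rather than a proof; it names the right tools (the boundary condition $\int_0^1 w'=1-\delta$, a two-well dichotomy, dyadic scales, Gagliardo--Nirenberg), and these are indeed the ingredients of Lemmas \ref{lem:well-cvg}--\ref{cor:badwell_LB}, but the actual estimates --- in particular the control of $u$ at well-chosen radii to bound $\int|w'|$ near the origin, and the separate treatment of the wrong well $W^-$ --- still have to be supplied.
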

\begin{proof}
 The upper bound follows from Proposition \ref{thm:UB}, which is proven 
 in  Section \ref{secUB}. 
 The lower bound follows from Proposition \ref{thm:LB}, which is proven 
 in  Section \ref{secLB}. 
\end{proof}
Our second result is that a lower elastic energy 
can be achieved if the requirement of radial symmetry is
dropped. 
Let the set of allowed configurations be
\[
\begin{split}
  \tilde\A_\delta=\{&(U,W)\in  W^{1,2}(B_1;\R^2)\times W^{2,2}(B_1):
    W(0)=0,|W|\leq 1-\delta\text{ on }\partial B_1\}\,.
\end{split}
\]
\begin{theorem}
\label{thm:ridgeconst}
There exists a numerical constant $C>0$ such that
\[
  \min_{\tilde \A_\delta}\tilde E_h\leq C h^{5/3}
\]
for all $h\in (0, 1/2]$, $\delta\in [0,1/2]$.
\end{theorem}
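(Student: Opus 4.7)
I would prove Theorem~\ref{thm:ridgeconst} by exhibiting an explicit $(U,W)\in\tilde\A_\delta$ whose energy is at most $Ch^{5/3}$, independently of $\delta$. Since the constraint $|W|\le 1-\delta$ on $\partial B_1$ is weakest for $\delta=0$ and strongest for $\delta=1/2$, a single pair $(U,W)$ with $W(0)=0$ and $|W|\le 1/2$ on $\partial B_1$ will be admissible for every $\delta\in[0,1/2]$.

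Motivated by Figure~\ref{fig:pyramid} I would take for $W$ a regularized, four-fold dihedrally symmetric inverted pyramid. Start from a piecewise affine profile $W_0$ with apex at the origin, four planar facets, and four ridges extending along the diagonals $x_1=\pm x_2$, with slopes chosen so that $W_0(0)=0$ and $|W_0|\le 1/2$ on $\partial B_1$. The profile $W_0$ is Lipschitz but not in $W^{2,2}$, so the first step is to regularize each ridge on a transverse strip of width $w$ using the Lobkovsky--Witten ansatz, which combines a smooth cross-ridge bending profile of curvature $\sim 1/w$ with a longitudinal sag of $W$ along the ridge; the apex is smoothed similarly on a ball $B_w(0)$. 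The parameter $w$ is left free until the end.

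To define $U$ I would proceed facet-by-facet. On each flat facet $DW_0\otimes DW_0$ is a constant rank-one matrix, and a direct computation gives the pointwise identity $\operatorname{curl}\operatorname{curl}(\hat x\otimes\hat x)=0$ on $\R^2\setminus\{0\}$; combined with $\hat x^\perp\otimes\hat x^\perp=\id-\hat x\otimes\hat x$, this shows that the target tensor $-DW_0\otimes DW_0-\hat x^\perp\otimes\hat x^\perp$ has zero $\operatorname{curl}\operatorname{curl}$ on each facet. Cutting each facet away from the origin to make it simply connected, I would exhibit an explicit $U_k$ solving $DU_k+DU_k^T+DW_0\otimes DW_0+\hat x^\perp\otimes\hat x^\perp=0$ pointwise. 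Across each ridge, adjacent facet choices differ by an affine ``rigid'' mismatch coming from the jump of the constant $DW_0\otimes DW_0$, together with the conical monodromy of the primitive of $\hat x\otimes\hat x$ around the origin distributed among the four ridges. In each smoothed ridge strip I would interpolate between facet choices, choosing the longitudinal sag of $W$ so as to absorb the mismatch at the optimized Lobkovsky--Witten scale.

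Off the smoothed regions the membrane integrand and $D^2W$ both vanish, so the total energy reduces to contributions from the four ridge strips and the apex ball. A single smoothed ridge strip of length $O(1)$ and dihedral angle $O(1)$ is a variant of the classical Lobkovsky--Witten ridge problem, in which the bending term contributes $h^2\int|D^2W|^2\sim h^2/w$ and the sag-induced membrane residual contributes a positive power of $w$; optimizing over $w$ gives a per-strip total of order $h^{5/3}$. The apex region gives no more, and summing yields $\tilde E_h(U,W)\le Ch^{5/3}$. The main technical obstacle is the construction and matching of $U$: one must track the affine gaps between the facet-wise antiderivatives of $\hat x\otimes\hat x$, verify that the conical $2\pi$ monodromy around the origin is correctly partitioned among the four ridges, and check that the longitudinal sag absorbs these gaps within the $h^{5/3}$ ridge budget. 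Once this is in place, the per-ridge energy estimate is a variant of the classical Lobkovsky--Witten calculation.
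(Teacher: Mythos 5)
Your overall strategy is the paper's: a piecewise affine pyramid profile, facet-wise exact solution of the membrane compatibility equation, Lobkovsky-type smoothed ridges, and mollification at the vertices. But there are two concrete defects. The first is in the choice of $W_0$. The monodromy condition you flag as "to be verified" is not a free check: for the facet-wise primitives $U_k$ to glue into a single-valued $U\in W^{1,2}(B_1;\R^2)$ with vanishing membrane term on the facets, the Gauss curvature of $W_0$ concentrated at the apex must exactly cancel the disclination charge of $\hat x^\perp\otimes\hat x^\perp$; any residual leaves an $O(1)$ rotational jump $c\,x^\perp$ across a cut, whose regularization costs far more than $h^{5/3}$. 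This \emph{forces} the apex slope ($4\alpha^2=\pi$ for the paper's $\ell^1$-pyramid, $2\alpha^2=\pi$ for your $\max\{|x_1|,|x_2|\}$-type pyramid with ridges on the diagonals), and with that slope a single pyramid has $|W_0|\ge\sqrt{\pi}/2>1/2$ everywhere on $\partial B_1$, violating the admissibility constraint for $\delta=1/2$. So you cannot "choose the slopes so that $|W_0|\le 1/2$ on $\partial B_1$" and close the monodromy at the same time. The paper's profile $W=\alpha\min\{|x_1|+|x_2|,\,1-|x_1|-|x_2|\}$ resolves exactly this: the steep inner pyramid (slope fixed by $4\alpha^2=\pi$) folds back down across an additional square ring of ridges, so that $|W|\le\alpha(\sqrt2-1)<1/2$ on $\partial B_1$. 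Your construction needs this fold-back, together with the extra ridges and vertices it creates (twelve segments and nine vertices rather than four and one).

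The second defect is the constant-width ridge strip. A strip of fixed width $w\sim h^{1/3}$ cannot match the affine facet data at the two endpoints of a ridge; near each of the nine vertices (where several ridges meet) this leaves a region of area $\sim h^{2/3}$ with an $O(1)$ membrane misfit, costing $\sim h^{2/3}\gg h^{5/3}$, and mollifying $W$ there at scale $h^{1/3}$ without correcting $U$ has the same cost. The paper's Lemma \ref{lemma:ridge}, following Conti--Maggi, uses a \emph{tapered} width $f(x)\sim h^{1/3}(h+x)^{2/3}$ that shrinks to $O(h)$ at the ridge tips, so that $(V^h,W^h)$ agree with the affine data on the entire boundary of each quadrilateral away from $h$-balls around the tips; only then can the vertices be mollified at scale $h$ at cost $O(h^2)$, and the per-ridge energy still integrates to $h^{5/3}l^{1/3}$. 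To complete your argument you must either adopt such a tapered profile or supply an alternative mechanism for the endpoint matching; the constant-width Lobkovsky scaling argument alone does not close the construction.
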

The proof is given in Section \ref{secproof2} below. One of the main ingredients
of the proof of Theorem \ref{thm:ridgeconst} is the construction of a smoothed
ridge that interpolates between flat faces, see Lemma \ref{lemma:ridge}. This lemma
is based on a  construction by the first author and Maggi for the geometrically
fully nonlinear case \cite{Maggi08},
and improves a similar lemma by Bedrossian and Kohn \cite{BedrossianKohn2015}. 
A previous construction by Venkataramani \cite{Venkataramani04} within a von K\'arm\'an  model, which contained 
several of the ideas then used in \cite{Maggi08}, cannot be used here since 
it had a different geometry, including in particular different reference configurations on the two sides of the fold.
In Remark \ref{rem:bedrkohn} below, we note that this improvement of the von
K\'arm\'an version of the smooth ridge construction also leads to an
improvement of Theorem 1.4 in \cite{BedrossianKohn2015}.

The following corollary proves that the minimizer within the class $\tilde A_\delta$ is not radially symmetric for small enough $h$. To this end, we define 
\[
\tilde A^\text{rad}_\delta = \tilde A_\delta \cap \left \{U(x) = \tilde U(\abs{x})\frac{x}{|x|},\ W(x) = \tilde W(\abs{x})\right\}.
\]

\begin{corollary} 
There is $c>0$ such that 
for all $\delta \in (0,1/2]$, we have that 
\[
 \min_{\tilde \A_\delta} \tilde E_h < \min_{\tilde \A^\text{rad}_\delta} \tilde E_h \quad \text{ whenever }0<h<c\delta^3.
\]
\end{corollary}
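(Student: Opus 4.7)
The plan is to sandwich the two minima using the two main theorems: Theorem~\ref{thm:ridgeconst} provides an upper bound of order $h^{5/3}$ on $\min_{\tilde\A_\delta}\tilde E_h$, while Theorem~\ref{thm:main} must be converted into a lower bound on $\min_{\tilde\A_\delta^{\text{rad}}}\tilde E_h$. Comparing the two expressions then isolates the regime $h<c\delta^3$.

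First I would reduce $\min_{\tilde\A_\delta^{\text{rad}}}\tilde E_h$ to a minimization of $E_h$ over the class $\A_{\delta'}$ for some $\delta'\geq \delta$. For radial $(U,W)\in\tilde\A_\delta^{\text{rad}}$ one has $W(x)=w(|x|)$ with $w(0)=0$ and $|w(1)|\leq 1-\delta$; writing such $U$ as in \eqref{eq:changeofvar} and using the computation that yields \eqref{eq:2} gives $\tilde E_h(U,W)=2\pi E_h(u,w)$. The functional $E_h$ is invariant under $w\mapsto -w$, so after possibly replacing $w$ by $-w$ we may assume $w(1)=1-\delta'$ with $\delta'\in[\delta,1]$, i.e.\ $(u,w)\in\A_{\delta'}$. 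Applying Theorem~\ref{thm:main} to each admissible $\delta'$ and observing that $\min\{s^2h^{1/2},s^{1/2}h^{3/2}\}$ is monotone increasing in $s\geq 0$ (it is increasing in each branch and the branches meet at $s=h^{2/3}$), the worst case in $[\delta,1]$ occurs at $\delta'=\delta$, giving
\[
  \min_{\tilde\A_\delta^{\text{rad}}}\tilde E_h \;\geq\; \frac{2\pi}{C}\Bigl(h^2\log\tfrac1h+\min\bigl\{\delta^2h^{1/2},\,\delta^{1/2}h^{3/2}\bigr\}\Bigr).
\]

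Next I would note that for $h<c\delta^3$ with $c\leq 1$ one has $\delta>h^{1/3}>h^{2/3}$ (using $h\leq 1/2$), so the min equals $\delta^{1/2}h^{3/2}$, yielding $\min_{\tilde\A_\delta^{\text{rad}}}\tilde E_h\geq (2\pi/C)\,\delta^{1/2}h^{3/2}$. Combining with the upper bound $\min_{\tilde\A_\delta}\tilde E_h\leq C'h^{5/3}$ from Theorem~\ref{thm:ridgeconst}, strict inequality between the two minima is equivalent to
\[
  C'h^{5/3} \;<\; \frac{2\pi}{C}\,\delta^{1/2}h^{3/2},
  \qquad\text{i.e.}\qquad h^{1/3}\;<\;\frac{2\pi}{CC'}\,\delta^{1/2}\cdot\delta^{1/2}\cdot\text{(constants)},
\]
which after rearrangement becomes $h<c\delta^3$ for $c:=\min\{1,(2\pi/(CC'))^6\}$. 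Choosing this $c$ then yields the claimed strict inequality.

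I do not expect a genuine obstacle here; this is essentially a bookkeeping corollary of the two main results. The only subtle point is making sure the lower bound of Theorem~\ref{thm:main}---which is stated with the boundary condition $w(1)=1-\delta$---can be transferred to the inequality constraint $|W|\leq 1-\delta$ on $\partial B_1$ used in $\tilde\A_\delta$, which is handled by the reflection symmetry plus monotonicity argument outlined above. One should also verify, when finalizing the constants, that the $2\pi$ dropped between $\tilde E_h$ and $E_h$ and the two independent constants coming from the two theorems are tracked consistently, but none of this changes the $\delta^3$ threshold.
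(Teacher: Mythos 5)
Your proposal is correct and follows essentially the same route as the paper: reduce $\min_{\tilde\A_\delta^{\mathrm{rad}}}\tilde E_h$ to $\inf_{\delta'\in[\delta,1]}\min_{\A_{\delta'}}E_h$ via the change of variables (with the $w\mapsto -w$ symmetry handling the inequality constraint), apply the lower bound of Theorem~\ref{thm:main}, and compare with the $h^{5/3}$ upper bound of Theorem~\ref{thm:ridgeconst}. The only blemish is a slip in your intermediate display (the comparison $C'h^{5/3}<C''\delta^{1/2}h^{3/2}$ reduces to $h^{1/6}\lesssim\delta^{1/2}$, not $h^{1/3}\lesssim\delta$), but this does not affect the resulting threshold $h<c\delta^3$.
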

\begin{proof}
Changing variables as in \eqref{eq:changeofvar} and using  Theorem \ref{thm:main}  we find that 
\[
\min _{\tilde A^{\text{rad}}_\delta} \tilde E_h \gtrsim \inf_{\delta' \in [\delta, 1]} \min_{A_{\delta'}} E_h
\gtrsim
h^2{\log\frac1h}+\min\left\{\delta^2h^{1/2},\delta^{1/2}h^{3/2}\right\}
\]
Since by  Theorem \ref{thm:ridgeconst} we have 
$ \min_{\tilde \A_\delta} \tilde E_h\lesssim h^{5/3}$, the assertion follows.
\end{proof}

\subsection*{Notation}
The symbol $C$ denotes various numerical constants. The value of $C$ may change
from one line to the next, but never depends on the parameters $h$ and $\delta$. 
The notation ``$f\lesssim g$'' means that there exists  $C\in(0,\infty)$ such that $f\leq Cg$. \\
% % Assuming $0<h<\frac12$, the symbol $N_h$ will denote the integer that satisfies
% % \[
% % \log_2 \frac1h-2< N_h\leq \log_2\frac1h-1\,.
% % \]
%To abbreviate the binary operation  of taking the minimum, we use the symbol 
%$X\wedge Y = \min\{X,Y\}$.\\
We denote the oscillation of a function $f$ on a set $A$ by
\[
\osc{A}\{f\}=\sup_{x,y\in A}\,\abs{f\left(y\right)-f\left(x\right)}.
\]

\section{Upper bounds for radially symmetric cones}\label{secUB}

\begin{figure}
 \begin{center}
  \includegraphics[width=12cm]{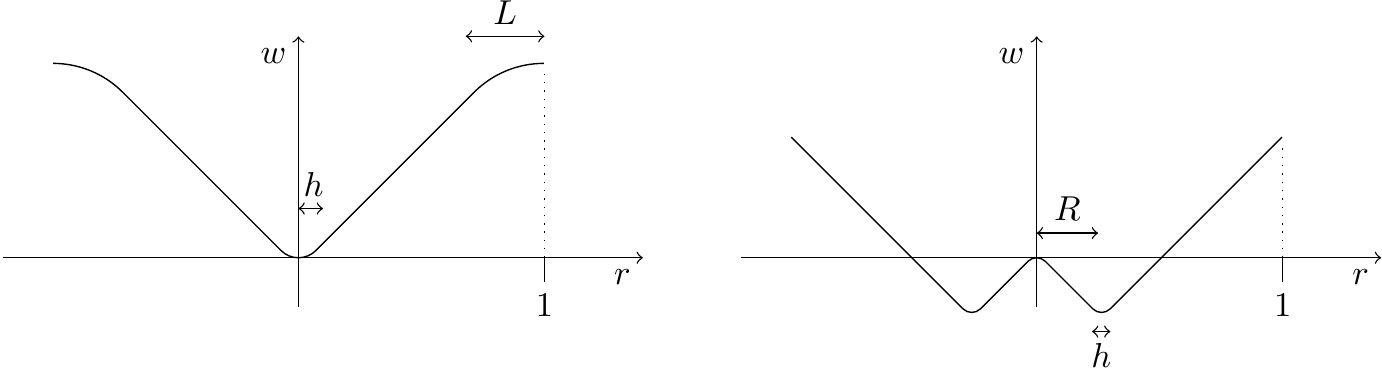}
 \end{center}
\caption{Sketch of the two upper bounds. Left panel: small-$\delta$ construction from Lemma  \ref{lem:UBii}. Right panel: large-$\delta$
construction from Lemma \ref{lem:UBi}.
For clarity we replicate the construction on the negative $r$-axis, so that the illustration can be understood
as the graph of the section $W(r,0)$ of the full three-dimensional picture.
}
\label{fig:constr}
\end{figure}
In this section we prove the upper bound contained in Theorem \ref{thm:main}. This is based on two different constructions,
which are relevant for small and large $\delta$, respectively, and which are illustrated in Figure \ref{fig:constr}.

For $\delta=0$ one starts from $w(r)=r$ and mollifies it close to the origin, on a length scale $h$. 
This gives rise to the well known $h^2\log \frac1h$ energy of the disclination, and already achieves a reduction
in total height
of order $h$. If $\delta\ge h$, then the larger indentation can be obtained by ``flattening'' the cone. The optimal way of doing this is by localizing the
modification to a neighborhood (of size $L$) of the free boundary, so that the $u'$ term can be used to compensate 
the error  in $(w')^2-1$. However, this generates an increasing error in $u$, which is penalized by the $u^2/r$
term in the energy. This mode of response is qualitatively linear, as no use is made of the nonconvexity of the energy,
and results in a total energy of order $\delta^2h^{1/2}$. This is illustrated in the left panel of Figure \ref{fig:constr}
and performed in Lemma \ref{lem:UBii} below.

For larger deformations it becomes convenient to use the two-well structure of the problem, setting $w'=-1$ in part of the domain.
In other words, one mixes the two constructions which would arise by $w(r)=-r$ and $w(r)=r$.
One then needs to construct an appropriate interface between $w'=-1$ and $w'=1$, and to use $u$ in an optimal way to minimize
the interfacial energy. This is illustrated in the right panel of Figure \ref{fig:constr}
and performed in Lemma \ref{lem:UBi} below.

Recall the definition of $A_\delta$ given in \eqref{eq:Adelta}.

\begin{proposition}
\label{thm:UB}There exists a numerical constant $C>0$ such that
\[
\min_{\A_{\delta}}\,E_{h}\leq 
C\left( h^{2}{\log\frac1h}+\min\left\{ \delta^{2}h^{1/2},\delta^{1/2}h^{3/2}\right\}  \right)
\]
whenever $0<h\leq \frac12$, $0\leq \delta\leq1$.
\end{proposition}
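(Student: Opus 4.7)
The plan is to produce two explicit admissible configurations in $\A_\delta$, one tailored to small $\delta$ and one to large $\delta$, and to take whichever gives the smaller energy. Both constructions build on a common base that already realizes the $h^2\log(1/h)$ contribution: I would take, for instance, $w_0(r)=\sqrt{r^2+h^2}-h$ together with $u_0(r)=h\arctan(r/h)$, which satisfies $u_0'+(w_0')^2-1=0$ identically, lies in $\A_0$, and has energy of order $h^2\log(1/h)$ (the logarithm arising from both $\int h^2(w_0')^2/r\,dr$ and $\int u_0^2/r\,dr$).

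For the small-$\delta$ construction I would modify this base only on an annular boundary layer $[1-L,1]$, smoothly lowering $w$ so as to realize $w(1)=1-\delta$, and simultaneously adjusting $u$ so that the relation $u'+(w')^2-1=0$ remains satisfied on the layer (which forces $u$ to grow by $\sim\delta$ across it). The three relevant new contributions are the additional bending $h^2\delta^2/L^3$, a $u^2/r$ cost of order $\delta^2 L$ on the layer, and lower-order $(w')^2/r$ terms. Balancing the first two fixes $L\sim h^{1/2}$ and yields an extra energy of order $\delta^2 h^{1/2}$.

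For the large-$\delta$ construction I would use the two-well structure of the integrand to build a radially symmetric conical inversion: set $w'\approx -1$ on $[0,\delta/2]$ and $w'\approx +1$ on $[\delta/2,1]$, with a smooth transition of width $L$ centered at $r=\delta/2$. The transition costs bending of order $h^2\delta/L$ and forces $u$ to jump by an amount controlled by $\int(1-(w')^2)\,dr$ across it. The delicate point is to design the transition profile for $w$ jointly with the behavior of $u$ (and possibly a small adjustment of $w$) on the outer annulus so that the resulting $\int u^2/r$ on $[\delta/2,1]$ does not blow up with $L$; optimization in $L$ should then produce the claimed $\delta^{1/2}h^{3/2}$.

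The main obstacle is exactly this last point. A naive implementation of the inversion, in which $u$ is simply left at its post-transition value of order $L$ throughout the entire outer annulus, produces only $(h^2\delta)^{2/3}$, which is strictly worse than $\delta^{1/2}h^{3/2}$ whenever $\delta>h$; the improvement to the sharp scaling requires the more careful joint design of $u$ and $w$ in the transition and outer regions that the paper isolates as Lemma \ref{lem:UBi}. Granting that construction, together with its small-$\delta$ counterpart in Lemma \ref{lem:UBii}, the proposition follows immediately by taking, for each $(h,\delta)$, whichever of the two constructions gives the smaller bound.
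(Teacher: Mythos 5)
Your overall strategy coincides with the paper's: a boundary-layer flattening for small $\delta$ and a central conical inversion for large $\delta$, combined by taking whichever is cheaper, with the crossover at $\delta\sim h^{2/3}$. Your small-$\delta$ construction is essentially the paper's Lemma \ref{lem:UBii} (with the same balance $L\sim h^{1/2}$). However, the large-$\delta$ bound is where the real content of Proposition \ref{thm:UB} lies, and there you stop short: you correctly diagnose that the naive inversion, with $u$ left at its post-transition value on the outer annulus, only yields $(h^2\delta)^{2/3}$, and then you explicitly ``grant'' the sharp construction rather than produce it. That is a genuine gap — the step you defer is precisely the nontrivial one.

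The missing idea is short, so let me record it. Choose the transition profile $W_0\in C^\infty([-1,1])$ with $W_0(\pm1)=0$, $W_0'(\pm1)=\pm1$, and the normalization $\int_{-1}^{1}\bigl(|W_0'|^2-1\bigr)\,dt=0$; this is achieved by mollifying $|t|-1$ and adding a compactly supported bump to tune the integral. With $R=\delta/2$ and $l\sim\sqrt{h\delta}$, set $w'=W_0'((r-R)/l)$ on $[R-l,R+l]$, $w'\approx-1$ below and $w'=1$ above, and define $u(r)=\int_{R-l}^{r}(1-|w'|^2)\,dt$ there. The normalization forces $u(R+l)=0$, so $u\equiv0$ on all of $[R+l,1]$; on the transition zone $|u|\lesssim l$, whence $\int u^2/r\lesssim l^3/R\sim\delta^{1/2}h^{3/2}$, the second membrane term vanishes identically where $u$ is active, and the bending cost is $h^2R/l\sim\delta^{1/2}h^{3/2}$. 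This is exactly Lemma \ref{lem:UBi}. Two smaller points: your base profile $\sqrt{r^2+h^2}-h$ satisfies $w(1)=1-\delta_0$ with $\delta_0\sim h$, not $w(1)=1$, so it is not in $\A_0$ and the boundary layer must also absorb this $O(h)$ mismatch (at negligible cost, since $\A_\delta$ imposes equality at $r=1$); and for $h^{1/2}\lesssim\delta\lesssim h^{2/3}$ the displacement in your small-$\delta$ construction grows by $\delta+\delta^2/L$ rather than $\delta$, producing an extra term $\delta^4h^{-1/2}$ which must be checked to be $\le h^2$ in that regime — it is, but the bookkeeping belongs in the proof.
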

We prove this in two parts.
\begin{lemma}
\label{lem:UBi}We have that
\[
\min_{\A_{\delta}}\,E_{h}\lesssim h^{2} \log\frac1h+ \delta^{1/2}h^{3/2}
\]
whenever $0<h\leq\delta\leq1$ and $h\leq \frac12$.\end{lemma}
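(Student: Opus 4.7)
The plan is to exhibit an explicit competitor $(u,w)\in\A_\delta$ of the ``inverted cone'' type and verify that its energy has the stated scaling. The function $w$ should look like an inverted cone in the middle, joined to a standard cone outside by a smooth ridge located at the radius $R=\delta/2$ forced by the boundary condition $w(1)=1-\delta$.

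Concretely, fix a ridge half-width $\ell>0$ (to be chosen of order $\sqrt{h\delta}$) and define $w$ in four pieces. On $[0,h]$ smoothly mollify the disclination so that $w(0)=0$, $w'(0)=0$ and $w'(h)=-1$; on $[h,R-\ell]$ set $w'\equiv-1$; on $[R-\ell,R+\ell]$ interpolate $w'$ linearly from $-1$ to $+1$; on $[R+\ell,1]$ set $w'\equiv 1$, choosing the additive constant so that $w(1)=1-\delta$. I then define $u$ so as to nearly annihilate the membrane strain $w'^2-1$: outside the ridge, where $w'^2\equiv 1$, take $u$ to be from the Euler--Lagrange family for $\int(u^2/r+ru'^2)\,dr$, namely $u(r)=c_1 r$ on the inner region $[h,R-\ell]$ (using $u(0)=0$) and $u(r)=c_2(r+1/r)$ on $[R+\ell,1]$ (using the natural boundary condition $u'(1)=0$); on the ridge, let $u'=1-w'^2$, continuously matched to the outer pieces.

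The energy bookkeeping then separates into: (i) the standard disclination estimate on $[0,h]$ giving $\lesssim h^2\log\frac{1}{h}$; (ii) the term $h^2 w'^2/r$ integrated on $[h,1]$, which is again $\lesssim h^2\log\frac{1}{h}$; (iii) the ridge bending energy $\int h^2 r w''^2$, of order $h^2 R/\ell=h^2\delta/\ell$; (iv) the ridge $u^2/r$ contribution, of order $\ell^3/\delta$ since $|u|\lesssim\ell$ on a window of width $\ell$ at radius $\sim\delta$; and (v) the decay cost of $u$ outside the ridge, controlled via the explicit form of the Euler--Lagrange minimizer. Balancing (iii) against (iv) forces the choice $\ell\sim\sqrt{h\delta}$, producing the announced scale $h^{3/2}\delta^{1/2}$.

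The main obstacle is controlling item (v) cleanly: a careless estimate of the outer decay yields a contribution of order $A^2\sim\ell^2=h\delta$, which exceeds the target $h^{3/2}\delta^{1/2}$ in the regime $h\le\delta$. The delicate step is therefore to arrange, through the matching of $u$ on the ridge with the Euler--Lagrange profile $c_2(r+1/r)$ and the interior profile $c_1 r$, that the effective contribution of the outside region is dominated by the ridge $u^2/r$-term $\ell^3/\delta$ rather than by the naive $\ell^2$. This is the point where the radial constraint actually bites, and is what makes $h^{3/2}\delta^{1/2}$, rather than something smaller, the relevant scale.
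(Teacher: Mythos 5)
Your geometry and your choice of scales ($R=\delta/2$, ridge half-width $\ell\sim\sqrt{h\delta}$, balancing the ridge bending $h^2\delta/\ell$ against the ridge membrane term $\ell^3/\delta$) are exactly right, and they match the paper's construction. But the proof has a genuine gap at precisely the point you flag as ``the delicate step,'' and the remedy you gesture at cannot work. With $w'$ interpolated linearly from $-1$ to $+1$ across the ridge, one computes $\int_{R-\ell}^{R+\ell}(1-w'^2)\,dr=\tfrac{4}{3}\ell\neq 0$, so any $u$ that keeps the membrane strain small must change by an amount of order $\ell$ across the ridge. The minimum of $\int u^2/r+r\,u'^2\,dr$ over extensions of $u$ carrying a prescribed increment $A\sim\ell$ at radius $\sim\delta$ is of order $A^2$: the interior profile $c_1r$ contributes $\sim c_1^2R^2$, the exterior profile $c_2(r+1/r)$ contributes $\sim c_2^2/R^2$, and optimizing over the split of the increment between the two sides still leaves an energy $\gtrsim\ell^2=h\delta/100$, which exceeds $h^{3/2}\delta^{1/2}$ throughout the regime $h\le\delta$. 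No matching of Euler--Lagrange profiles can evade this; the obstruction is the nonzero mean of $w'^2-1$ over the ridge, not the way $u$ decays.

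The paper's fix is to redesign the ridge profile rather than the displacement: it chooses a smooth $W_0$ on $[-1,1]$ with $W_0'(\pm1)=\pm1$ \emph{and} the integral constraint $\int_{-1}^1\left(|W_0'|^2-1\right)dt=0$, obtained by mollifying $|x|-1$ (which makes the integral negative) and adding a corrective bump so that $|W_0'|$ overshoots $1$ somewhere. Setting $w'(r)=W_0'((r-R)/\ell)$ on the ridge and $u'=1-w'^2$ there, the constraint forces $u(R+\ell)=0$, so $u$ can be taken identically zero outside the ridge; the only membrane cost is then $\int_{R-\ell}^{R+\ell}u^2/r\lesssim\ell^3/R\sim h^{3/2}\delta^{1/2}$, with no outer decay contribution at all. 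Your item (v) thus disappears entirely once the ridge profile carries zero net strain; without that modification your construction, as written, only yields the bound $h^2\log\frac1h+h\delta$.
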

\begin{proof}
We claim that there exists $W_{0}\in C^{\infty}\left([-1,1]\right)$
with the following properties:
\[
W_{0}\left(\pm1\right)=0,\quad W_{0}'\left(\pm1\right)=\pm1,\quad\int_{-1}^{1}\abs{W_{0}'\left(t\right)}^{2}-1\,dt=0.
\]
Indeed, this can be achieved by mollifying the function $\overline{W}\left(x\right)=\abs{x}-1$
and adding a suitable multiple of a smooth, non-positive function
that is supported near the origin. Next, choose $\eta\in C^{\infty}\left([0,\infty)\right)$
such that 
\[
\eta\left(x\right)=0\ \text{for}\ x\leq\frac{1}{5},\quad\eta\left(x\right)=1\ \text{for}\ x\geq\frac{2}{5},\quad\fint_{0}^{2/5}\eta\left(x\right)\,dx=1.
\]
Let $R=\frac{1}{2}\delta$ and $l=\frac{1}{10}\sqrt{h\delta}$. Note
that for all $\delta\geq h$, 
\[
\frac{R-l}{h}=\frac{1}{2}\frac{\delta}{h}-\frac{1}{10}\sqrt{\frac{\delta}{h}}\geq\frac{2}{5}.
\]
Given these choices, we define
\[
w_{1}'\left(r\right)=\begin{cases}
-\eta\left(\frac{r}{h}\right) & 0\leq r\leq R-l\\
W_{0}'\left(\frac{r-R}{l}\right) & R-l\leq r\leq R+l\\
1 & R+l\leq r\leq1
\end{cases}
\]
and $w_{1}\left(r\right)=\int_{0}^{r}w_{1}'\left(t\right)\,dt$. Also,
we take 
\[
u_{1}\left(r\right)=\begin{cases}
0 & 0\leq r\leq R-l\\
\int_{R-l}^{r}1-\abs{w_{1}'\left(t\right)}^{2}\,dt & R-l\leq r\leq1
\end{cases}.
\]
Then, we have that $w_1(0)=0$ and that
\begin{align*}
w_{1}\left(1\right) & =\int_{0}^{R-l}-\eta\left(\frac{r}{h}\right)\,dr+\int_{R-l}^{R+l}W_{0}'\left(\frac{r-R}{l}\right)\,dr+\int_{R+l}^{1}1\,dr\\
 & =\int_{0}^{2h/5}-\eta\left(\frac{r}{h}\right)\,dr-\left(R-l-\frac{2h}{5}\right)+1-\left(R+l\right)\\
 & =1-2R=1-\delta.
\end{align*}
Thus, by Lemma \ref{lem:radsymmspace}, we have that $\left(u_{1},w_{1}\right)\in \A_{\delta}$.

Now we estimate the energy of this construction. For the membrane
terms, we have the estimates
\[
\int_{0}^{1}\frac{u_{1}^{2}}{r}\,dr=\int_{R-l}^{R+l}\frac{u_{1}^{2}}{r}\,dr\lesssim\frac{l^{3}}{R}\lesssim\delta^{1/2}h^{3/2}
\]
and
\[
\int_{0}^{1}\left(u'_1+w_{1}'^{2}-1\right)^{2}r\,dr=\int_{0}^{R-l}\left(\left|\eta\left(\frac{r}{h}\right)\right|^{2}-1\right)^{2}r\,dr\lesssim h^{2}.
\]
For the bending terms, we have the estimates 
\begin{align*}
\int_{0}^{1}w_{1}''^{2}r\,dr & =\int_{0}^{R-l}\left|\frac{1}{h}\eta'\left(\frac{r}{h}\right)\right|^{2}r\,dr+\int_{R-l}^{R+l}\left|\frac{1}{l}W_{0}''\left(\frac{r-R}{l}\right)\right|^{2}r\,dr\\
 & \lesssim1+\frac{R}{l}\lesssim\sqrt{\frac{\delta}{h}}
\end{align*}
and
\[
\int_{0}^{1}\frac{w_{1}'^{2}}{r}\,dr=\int_{\frac{h}{5}}^{1}\frac{w_{1}'^{2}}{r}\,dr\lesssim\log\frac{1}{h}.
\]
Therefore,
\[
E_{h}\left(u_{1},w_{1}\right)\lesssim \delta^{1/2}h^{3/2}+h^{2}\log\frac{1}{h}\,.
\]

\end{proof}

\begin{lemma}
\label{lem:UBii}We have that
\[
\min_{\A_{\delta}}\,E_{h}\lesssim \delta^{2}h^{1/2}+h^{2}{\log\frac1h}+\delta^{4}h^{-1/2}
\]
whenever $0\leq\delta\leq1$, $0<h\leq \frac12$.\end{lemma}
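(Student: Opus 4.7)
The plan is to produce an explicit competitor $(u_2,w_2)\in\A_\delta$ whose energy matches the stated bound by combining two localized modifications of the isometric cone $w(r)=r$. Near the origin I would apply a standard disclination mollification on the scale $h$, which is responsible for the $h^2\log\frac{1}{h}$ term, and on an outer strip $[1-L,1]$ of width $L=h^{1/2}$ I would modify $w$ by a small bump so that the boundary condition $w(1)=1-\delta$ is met. The resulting membrane stress $(w')^2-1$ will then be cancelled exactly by a suitable in-plane displacement $u$ supported in the outer strip. This is the qualitatively linear response described in the introduction of this section, in contrast to Lemma \ref{lem:UBi} where the two-well structure of the problem is exploited.

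Concretely, let $\eta$ be the smooth cutoff from the proof of Lemma \ref{lem:UBi}, and let $\eta_1$ be a smooth nonnegative bump supported in $[1-L,1]$ with $\int \eta_1=\delta$, $\|\eta_1\|_\infty\lesssim \delta/L$, and $\|\eta_1'\|_\infty\lesssim \delta/L^2$. Define
\[
w_2'(r)=\begin{cases}\eta(r/h) & 0\le r\le 2h/5,\\ 1 & 2h/5\le r\le 1-L,\\ 1-\eta_1(r) & 1-L\le r\le 1,\end{cases}
\]
and $w_2(r)=\int_0^r w_2'(t)\,dt$, so that $w_2(0)=0$ and $w_2(1)=1-\delta$. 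For the in-plane displacement, set $u_2\equiv 0$ on $[0,1-L]$ and $u_2'(r)=1-w_2'(r)^2=2\eta_1-\eta_1^2$ on $[1-L,1]$, which forces the stress term $(u_2'+(w_2')^2-1)^2 r$ to vanish identically. Membership $(u_2,w_2)\in\A_\delta$ then follows from Lemma \ref{lem:radsymmspace}.

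The energy estimate is a direct computation. The key observation is that $|u_2(r)|\le \int_{1-L}^1(2\eta_1+\eta_1^2)\,dt\le 2\delta+\delta^2/L$, which yields $\int u_2^2/r\,dr\lesssim \delta^2 L+\delta^4/L$. The bending integral $\int h^2(w_2'')^2 r\,dr$ contributes $O(h^2)$ from the inner mollification plus $O(h^2\delta^2/L^3)$ from the outer bump (using $\|\eta_1'\|_\infty^2 L\lesssim \delta^2/L^3$), while $\int h^2(w_2')^2/r\,dr\lesssim h^2\log\frac{1}{h}$ is immediate from a $1/r$ integration on $[h/5,1]$. Substituting $L=h^{1/2}$ gives the target bound $\delta^2 h^{1/2}+\delta^4 h^{-1/2}+h^2\log\frac{1}{h}$. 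The main subtle point worth flagging is the $\delta^4 h^{-1/2}$ term: it comes from the quadratic correction $-\eta_1^2$ in $u_2'$, which is needed for the exact cancellation of the membrane stress but produces an $O(\delta^2/L)$ contribution to $u_2$ and hence $\delta^4/L$ to $\int u_2^2/r$. This extra cost is dominated by $\delta^2 h^{1/2}$ precisely when $\delta\le h^{1/2}$, i.e.\ in the regime where Lemma \ref{lem:UBii} is the operative bound within Proposition \ref{thm:UB}.
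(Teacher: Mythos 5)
Your construction is essentially identical to the paper's: same inner mollification on scale $h$, same outer strip of width $L=h^{1/2}$, same exact cancellation of the membrane stress via $u_2'=1-(w_2')^2$, and the same accounting that traces the $\delta^4h^{-1/2}$ term to the quadratic correction in $u_2'$. The only (immaterial) difference is that you modify $w_2'$ on the outer strip by a smooth bump $\eta_1$ with $\int\eta_1=\delta$, whereas the paper uses the linear ramp $1-\tfrac{2\delta}{L^2}(r-(1-L))$, which has the same size and derivative bounds.
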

\begin{proof}
Let $\eta$ be as in the previous proof and let $L=h^{1/2}$. Note
that for all $h\leq \frac12$, 
\[
\frac{1-L}{h}=\frac{1-\sqrt{h}}{h}\geq\frac{2}{5}.
\]
Define 
\[
w_{2}'\left(r\right)=\begin{cases}
\eta\left(\frac{r}{h}\right) & 0\leq r\leq1-L\\
1-\frac{2\delta}{L^{2}}\left(r-\left(1-L\right)\right) & 1-L\leq r\leq1
\end{cases}
\]
and $w_{2}\left(r\right)=\int_{0}^{r}w_{2}'\left(t\right)\,dt$. Define
\[
u_{2}\left(r\right)=\begin{cases}
0 & 0\leq r\leq1-L\\
\int_{1-L}^{r}(1-\abs{w_{2}'\left(t\right)}^{2})dt & 1-L\leq r\leq1
\end{cases}.
\]
Then, we have that $w_2(0)=0$ and that
\begin{align*}
w_{2}\left(1\right) & =\int_{0}^{1-L}\eta\left(\frac{r}{h}\right)\,dr+\int_{1-L}^{1}1-\frac{2\delta}{L^{2}}\left(r-\left(1-L\right)\right)\,dr\\
 & =\int_{0}^{\frac{2h}{5}}\eta\left(\frac{r}{h}\right)\,dr+\left(1-L-\frac{2h}{5}\right)
 +L-\int_{0}^{L}\frac{2\delta}{L^{2}}r\,dr\\
 & =1-\delta.
\end{align*}
Thus, by Lemma \ref{lem:radsymmspace}, we have that $\left(u_{2},w_{2}\right)\in \A_{\delta}$.

Now we estimate the energy of this construction. First, we note that
\[
\abs{u_{2}\left(r\right)}\lesssim\delta\left(1+\frac{\delta}{L}\right)
\]
for all $r\in\left[1-L,L\right]$, so that the first membrane term
satisfies
\[
\int_{0}^{1}\frac{u_{2}^{2}}{r}\,dr=\int_{1-L}^{1}\frac{u_{2}^{2}}{r}\,dr\lesssim\delta^{2}L\left(1+\frac{\delta}{L}\right)^{2}\lesssim\delta^{2}h^{1/2}\left(1+\frac{\delta^{2}}{h}\right).
\]
The second membrane term satisfies
\[
\int_{0}^{1}\left(u'_{2}+w_{2}'^{2}-1\right)^{2}r\,dr=\int_{0}^{1-L}\left(\left|\eta\left(\frac{r}{h}\right)\right|^{2}-1\right)^{2}r\,dr\lesssim h^{2}.
\]
The bending terms satisfy 
\begin{align*}
\int_{0}^{1}w_{2}''^{2}r\,dr & =\int_{0}^{1-L}\left|\frac{1}{h}\eta'\left(\frac{r}{h}\right)\right|^{2}r\,dr+\int_{1-L}^{1}\left|\frac{2\delta}{L^{2}}\right|^{2}r\,dr \lesssim1+\frac{\delta^{2}}{L^{3}}=1 + \frac{\delta^{2}}{h^{3/2}}
\end{align*}
and
\[
\int_{0}^{1}\frac{w_{2}'^{2}}{r}\,dr=\int_{\frac{h}{5}}^{1}\frac{w_{2}'^{2}}{r}\,dr\lesssim\log\frac{1}{h}.
\]
Therefore,
\begin{align*}
E_{h}\left(u_{2},w_{2}\right) & \lesssim \delta^{2}h^{1/2}\left(1 + \frac{\delta^{2}}{h}\right)+h^{2}\left(1 + \frac{\delta^{2}}{h^{3/2}}\right)+h^{2}\log\frac{1}{h} \\
 & \lesssim \delta^{2}h^{1/2}+\frac{\delta^{4}}{h^{1/2}}+h^{2}\log\frac{1}{h}\, .
\end{align*}

\end{proof}

\begin{proof}[Proof of Proposition \ref{thm:UB}]
We only need to combine Lemma \ref{lem:UBi} and Lemma \ref{lem:UBii}. 
If $\delta\le h^{2/3}$ then Lemma \ref{lem:UBii} implies that
\[
\min_{\A_{\delta}}\,E_{h}\lesssim \delta^{2}h^{1/2}+h^{2}{\log\frac1h}+\delta^{4}h^{-1/2}
 \lesssim\delta^{2}h^{1/2}+h^{2}{\log\frac1h}
\]
since $\delta^4h^{-1/2}\le  h^{2}$ in this case, and the proof is concluded.

If instead $\delta > h^{2/3}$ then necessarily $h\le \delta$, so that Lemma \ref{lem:UBi} implies
the assertion.
\end{proof}

\section{Lower bounds for radially symmetric cones}\label{secLB}

In this section, we prove the lower bound contained in Theorem \ref{thm:main}:
\begin{proposition}
\label{thm:LB}There exists a numerical constant $C>0$ such that
\[
\frac{1}{C} \left(h^{2}{\log\frac1h}+\min\left\{ \delta^{2}h^{1/2},\delta^{1/2}h^{3/2}\right\}\right)  \leq\min_{\A_{\delta}}\,E_{h}
\]
whenever $0<h\leq \frac12$, $0\leq \delta\leq1$.
\end{proposition}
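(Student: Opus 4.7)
The plan is to decouple the estimate into two separate lower bounds, $E_h\gtrsim h^2\log(1/h)$ (valid uniformly in $\delta$) and $E_h\gtrsim \min\{\delta^2h^{1/2},\delta^{1/2}h^{3/2}\}$, and then add them to obtain the claimed inequality (losing only a factor of two). Both rest on comparing the three ingredients of $E_h$ --- the Hardy-type term $\int u^2/r\,dr$, the membrane term $\int r(u'+w'^2-1)^2\,dr$, and the two bending terms --- via Cauchy--Schwarz against the natural weights $\sqrt{r}$ and $1/\sqrt{r}$.

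For the disclination piece, the key observation is that the radial regularity encoded in the definition of $\A_\delta$ (cf.\ Lemma \ref{lem:radsymmspace}) forces $w'(0)=0$, while the membrane and Hardy terms together prevent $|w'|$ from staying much smaller than $1$ on any scale significantly above $h$. Quantitatively, by Cauchy--Schwarz against $dr/r$,
\[
\left|\int_{r_1}^{r_2}(u'+w'^2-1)\,dr\right|^2\leq \log(r_2/r_1)\cdot\int_0^1 r(u'+w'^2-1)^2\,dr,
\]
and a Chebyshev argument applied to $\int u^2/r\,dr$ selects dyadic radii $r_1,r_2$ at which $|u|$ is small compared to $r_2-r_1$. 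Combining, one obtains $w'^2\approx 1$ in average sense on any annulus with inner radius $r_1\gtrsim h$, whence $\int_{r_1}^{1/2}h^2w'^2/r\,dr\gtrsim h^2\log(1/r_1)$; pushing $r_1$ down to scale $h$ and controlling the tail at smaller scales by $h^2\int rw''^2\,dr$ yields the $h^2\log(1/h)$ bound.

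For the indentation piece, I would split on whether the configuration inverts: let $I^-=\{r\in[0,1]:w'(r)<0\}$. In the no-inversion case $|I^-|\leq c\delta$, the constraint $\int w'\,dr=1-\delta$ combined with the sign condition gives $\int(1-w'^2)_+\,dr\gtrsim \delta$, which via the membrane constraint forces $|u|\gtrsim \delta$ on some set of length $L\leq 1$; optimizing the competition
\[
\int u^2/r\,dr+h^2\int rw''^2\,dr\gtrsim \delta^2L+\frac{h^2\delta^2}{L^3}
\]
over $L$ gives $\delta^2h^{1/2}$ at $L\sim h^{1/2}$. In the inversion case $|I^-|\geq c\delta$, there must be a transition interval of some length $\ell$ located near a radius $R\sim\delta$ across which $w'$ moves from values near $-1$ to values near $+1$; tracking the excursion of $u$ across this transition yields $\int u^2/r\gtrsim \ell^3/R$ and $h^2\int rw''^2\gtrsim h^2R/\ell$, balanced at $\ell\sim\sqrt{h\delta}$ to give $\delta^{1/2}h^{3/2}$. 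The main obstacle will be to extract from the mere existence of a substantial inversion set a single transition interval of controlled length and location, and correspondingly in the no-inversion case to pin down the concentration length $L$ of $u$; this is where the joint role of the bending term (which penalizes short transitions) and the Hardy term (which penalizes extended excursions of $u$ away from zero) is decisive.
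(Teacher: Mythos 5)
Your overall architecture (Hardy term penalizes excursions of $u$, membrane term ties $u'$ to $1-w'^2$, bending term penalizes transitions of $w'$, dyadic decomposition plus Chebyshev selection of good radii) matches the paper's, and your exponent balancing $\ell\sim\sqrt{h\delta}$, $L\sim h^{1/2}$ is correct. But the two points you yourself flag as ``the main obstacle'' are precisely where the proof lives, and your sketch does not contain the ideas that resolve them. In the no-inversion case, the step from $\int(1-w'^2)_+\gtrsim\delta$ to ``$|u|\gtrsim\delta$ on a set of length $L$'' is not justified: $u'\approx 1-w'^2$ only controls the \emph{signed} integral, and positive and negative contributions of $1-w'^2$ can cancel. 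The paper's mechanism is different: the pointwise two-well inequality $1-w'\le\tfrac12\bigl((1-w'^2)+(1-w'^2)^2\bigr)$ (valid for $w'\ge 0$) converts $\delta=\int(1-w')$ into a signed term plus a quartic term; the signed term is the total oscillation of the antiderivative $g_a$ of $1-w'^2$, which is bounded dyadically by the interpolation $\osc{I_a}\{g_a\}\lesssim\norm{g_a}^{3/4}\norm{g_a''}^{1/4}+\dots$ (Lemmas \ref{lem:non-local-lb}, \ref{lem:well-cvg}, \ref{lem:interp_osc}), with $\norm{g_a}_{L^2}\lesssim a^{1/2}E_h^{1/2}$ from Poincar\'e applied to $v=u-g_a$ and $\norm{g_a''}\lesssim\norm{w''}$ from the bending term; summing $(a_j/h)^{1/4}E_h^{1/2}$ over scales gives $\delta\lesssim h^{-1/4}E_h^{1/2}+(\text{lower order})$, i.e.\ $E_h\gtrsim\delta^2h^{1/2}$. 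Your heuristic competition $\delta^2 L+h^2\delta^2/L^3$ is the Lagrange-dual shadow of this interpolation, but there is no single concentration length $L$ to ``pin down,'' and the term $h^2\delta^2/L^3$ is never established as a lower bound on the bending energy. Note also that your quartic term $\int(1-w'^2)^2$ and the bound $\norm{g''}\lesssim\norm{w''}$ both require an \emph{upper} bound $|w'|\lesssim 1$ on the bulk, which your case split on $\mathcal{L}^1(I^-)$ does not provide (the paper gets it from $w'\in W^+=(\tfrac12,\tfrac32)$); relatedly, smallness of $|I^-|$ alone does not control $\int_{I^-}(1-w')$, so even your claim $\int(1-w'^2)_+\gtrsim\delta$ has a gap when $w'$ is very negative on a small set.

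In the inversion case, the device you are missing is the definition of $\tau$ as the \emph{outermost} point where $w'$ exits the wells $(-\tfrac32,-\tfrac12)\cup(\tfrac12,\tfrac32)$, together with the dichotomy on $[\tau-\epsilon,\tau]$: either $\osc{}\{w'\}\le\tfrac14$ there, in which case $|1-w'^2|\gtrsim 1$ with a fixed sign and Lemma \ref{lem:non-local-lb} gives $E_h\gtrsim\epsilon^3/\tau$, or the oscillation exceeds $\tfrac14$ and the bending term gives $E_h\gtrsim\tau h^2/\epsilon$ (Lemma \ref{cor:invertedLB}). This replaces your unextracted ``transition interval near $R\sim\delta$'' --- which your hypothesis $\mathcal{L}^1(I^-)\ge c\delta$ does not localize --- by a concrete, checkable statement, and the case split $\tau\gtrless\delta/8$ (rather than on $|I^-|$) is what makes the complementary case tractable. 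Your $h^2\log\tfrac1h$ argument is essentially sound (it parallels Lemmas \ref{lem:ctrl-origin} and \ref{lem:conicalLB}), though the assertion that the function space forces $w'(0)=0$ is not meaningful and is not needed; what matters is only that $w'^2\gtrsim1$ on average on dyadic annuli down to a scale like $h(\log\tfrac1h)^2$, which suffices for $\int h^2 w'^2 r^{-1}dr\gtrsim h^2\log\tfrac1h$.
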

The key idea is to consider $\min_{\A_\delta}E_h$ as a two-well problem. Indeed, if the energy is small then $u$ has to be small,
at least in an $L^2$ sense. Therefore $u'$ is also small (in a weak sense) and $w'$ needs to be close to $+1$ or $-1$.
In turn, oscillations in $w'$ are penalized by the curvature terms.
Since  the length scale over which $w'$ varies 
is controlled, the $L^2$ control on $u$ gives an effective control on the derivative $u'$.
Therefore, at a certain level of abstraction we are dealing with a problem of the well-known
type of $\int_0^1 ((w')^2-1)^2 + h^2 (w'')^2 dx$.
The situation is  made more complex not only by the mentioned presence of $u$, but also by
the various factors $r$ and $1/r$, which make the behavior
close to the origin different. The region closest to the origin is treated separately (here the $(w')^2/r$ curvature
term dominates) and the rest will be decomposed into dyadic intervals, so that in each of them $r$ can be replaced by a constant.

In order to make the two-well structure quantitative, we define
 the ``wells''
\begin{equation}
W=W^{+}\cup W^{-},\quad W^{+}=\left(\frac{1}{2},\frac{3}{2}\right),\quad W^{-}=\left(-\frac{3}{2},-\frac{1}{2}\right),\label{eq:wells}
\end{equation}
and for any $f \in C((0,1])$ we define $\tau$  as the furthest point from the origin at which $f$ does
not belong to one of them,
\begin{equation}\label{eq:outofwell}
\tau_f = \max\left\{ t\in(0,1]:  f(t)\notin W\right\}.
\end{equation}
If $f \in W$ for all $t\in(0,1]$, then we set $\tau_f=0$. The maximum is otherwise 
attained, since $f$ is assumed to be continuous on $(0,1]$. We also remark that if $\tau_f\in (0,1)$ then $f(\tau_f)\in \partial W$.

The proof is separated into three parts, which correspond to the three expressions in the 
statement of Proposition \ref{thm:LB}. %We start with the large-compression regime.

In the entire section we assume that
\begin{equation}\label{eqstandingass}
\text{ $\delta\in [0,1]$,\hskip3mm $h\in (0,1/2]$, \hskip3mm$(u,w)\in \A_\delta$, \hskip3mm$E_h=E_h(u,w)$}.
\end{equation}
In a slight abuse of notation, we will write $w'$ to mean both the Sobolev function and its continuous representative on $(0,1]$ (see Lemma \ref{lem:radsymmspace} in the appendix). Given this, we define
\begin{equation}\label{eq:furthestpt}
\tau = \tau_{w'},
\end{equation}
referring to the definition in (\ref{eq:outofwell}) above.

\subsection{Full conical inversion}

We begin the proof with a construction that will be used throughout 
to quantify the fact that ``low energy'' requires that $w'$ is mostly inside the wells.
Given $w\in W^{1,2}\left([0,1]\right)$ and $a\in\left[0,\frac{1}{2}\right]$,
we define 
\begin{equation}
I_{a}=\left[a,2a\right]\quad\text{and}\quad g_{a}(r)=\int_{a}^{r}(1-w'^{2})dt+c_{a}\label{eq:g_a}
\end{equation}
where $c_{a}$ is chosen such that $\fint_{I_{a}}g_{a}dt=0$. 
\begin{lemma}\label{lem:non-local-lb}
 Assume (\ref{eqstandingass}). For all $0\leq a\leq\frac{1}{2}$, we have
that
\[
\norm{g_{a}}_{L^{2}\left(I_{a}\right)}\lesssim a^{1/2}E_{h}^{1/2}\,.
\]
\end{lemma}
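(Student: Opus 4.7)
The plan is to rewrite $g_a$ as a difference of mean-zero parts of two functions that the energy directly controls, then apply Poincaré on the short interval $I_a$ together with the weighted $L^2$-bounds coming from $E_h$.

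First I would identify the right algebraic decomposition. Set $\psi(r) = \int_a^r(u'+w'^2-1)\,dt$ on $I_a$, which measures the deviation of the membrane strain from zero. A direct computation gives
\[
g_a(r) = u(r) - u(a) - \psi(r) + c_a,
\]
and imposing $\fint_{I_a} g_a\,dt = 0$ fixes $c_a = u(a) - \fint_{I_a}u\,dt + \fint_{I_a}\psi\,dt$. Therefore
\[
g_a(r) = \Bigl(u(r) - \fint_{I_a}u\Bigr) - \Bigl(\psi(r) - \fint_{I_a}\psi\Bigr)\quad\text{on }I_a,
\]
so by the triangle inequality
\[
\|g_a\|_{L^2(I_a)} \leq \Bigl\|u - \fint_{I_a}u\Bigr\|_{L^2(I_a)} + \Bigl\|\psi - \fint_{I_a}\psi\Bigr\|_{L^2(I_a)}.
\]

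Next I would bound each of the two terms by $a^{1/2}E_h^{1/2}$. For the first, since subtracting the mean cannot increase the $L^2$-norm and $r\leq 2a$ on $I_a$,
\[
\Bigl\|u - \fint_{I_a}u\Bigr\|_{L^2(I_a)}^2 \leq \int_a^{2a} u^2\,dr = \int_a^{2a}\frac{u^2}{r}\,r\,dr \leq 2a\int_0^1\frac{u^2}{r}\,dr \leq 2a\,E_h.
\]
For the second, the one-dimensional Poincaré inequality on the interval $I_a$ of length $a$ yields
\[
\Bigl\|\psi - \fint_{I_a}\psi\Bigr\|_{L^2(I_a)} \lesssim a\,\|\psi'\|_{L^2(I_a)} = a\left(\int_a^{2a}(u'+w'^2-1)^2\,dr\right)^{1/2},
\]
and since $r \geq a$ on $I_a$,
\[
\int_a^{2a}(u'+w'^2-1)^2\,dr \leq \frac{1}{a}\int_a^{2a} r\,(u'+w'^2-1)^2\,dr \leq \frac{E_h}{a}.
\]
Thus this second term is also $\lesssim a^{1/2}E_h^{1/2}$, and combining both bounds proves the claim. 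The case $a=0$ is trivial since $I_0$ is a point.

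The only subtlety is ensuring that the pointwise value $u(a)$ used in the decomposition is meaningful: this is covered by the regularity of the radial pair $(u,w)$ recorded in the appendix (Lemma \ref{lem:radsymmspace}), and anyway the final statement depends only on $g_a$, not on the choice of $u(a)$. I do not expect any real obstacle — the one conceptual step is recognizing that $g_a$ is exactly the mean-zero part of $u - \psi$ on $I_a$, after which the estimate reduces to weighted $L^2$ bookkeeping and a scale-$a$ Poincaré inequality.
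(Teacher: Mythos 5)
Your proof is correct and is essentially the paper's argument in a slightly reorganized form: the paper sets $v=u-g_a$ (so that $v'=u'+w'^2-1=\psi'$) and extracts $\|g_a\|_{L^2(I_a)}$ by a reverse triangle inequality plus the orthogonality from $\fint_{I_a}g_a=0$, whereas you write the equivalent identity $g_a=(u-\fint_{I_a}u)-(\psi-\fint_{I_a}\psi)$ and apply the forward triangle inequality. The key ingredients — the weighted control of $u$ by the $u^2/r$ term and the scale-$a$ Poincaré inequality applied to the antiderivative of the membrane strain — are identical, so no further comment is needed.
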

\begin{proof}
Let $v=u-g_{a}$. Then, using Poincare's inequality 
we have the chain of inequalities 
\begin{equation}
  \begin{split}
    E_{h} & \geq\int_{I_{a}}\frac{\left(v+g_{a}\right)^{2}}{r}dr+\int_{I_{a}}r\abs{v'}^{2}dr\\
    & \gtrsim\frac{1}{a}\int_{I_{a}}\abs{v+g_{a}}^{2}dr+a\int_{I_{a}}\abs{v'}^{2}dr\\
    & \gtrsim\frac{1}{a}\int_{I_{a}}\abs{v+g_{a}}^{2}dr+\left|v-\fint_{I_{a}}v\, dr'\right|^{2}dr\\
    & \gtrsim\frac{1}{a}\int_{I_{a}}\left|g_{a}+\fint_{I_{a}}v\,dr'\right|^{2}dr\,.
  \end{split}\label{eq:6}
\end{equation}
Furthermore, using that $\fint_{I_{a}}g_{a}dr=0$, we have that
\[
\int_{I_{a}}\left|g_{a}+\fint_{I_{a}}v\,dr'\right|^{2}dr = \int_{I_a} \abs{g_a}^2\,dr + a \left|\fint_{I_a} v dr\right|^2\\
\geq \int_{I_a} \abs{g_a}^2\,dr.
\]
%\[
%\begin{split}
%  \int_{I_a}|g_a|^2dr\lesssim& \int_{I_a}\left(\left|g_a+\fint_{I_a}
%    v\, dr'\right|^2+\left|\fint_{I_a} v\, dr'\right|^2\right)dr\\
%  = &\int_{I_a}\left|g_a+\fint_{I_a}
%    v\, dr'\right|^2dr+a\left|\fint_{I_a} u\, dr'\right|^2\\
%  \lesssim &\int_{I_a}\left|g_a+\fint_{I_a} v\, dr'\right|^2dr+a E_h\,.
%\end{split}
%\]
Combining this with \eqref{eq:6} yields the claim of the lemma.
\end{proof}
Next, we quantify the fact that low energy implies that $w'$
is nearly constant. 

\begin{lemma}
\label{cor:invertedLB}
Assume (\ref{eqstandingass}), let $\tau$ be defined as in (\ref{eq:furthestpt}). Then
\[
\min\left\{ \tau^{1/2}h^{3/2},\tau^{2}\right\} \lesssim E_{h}\,.
\]
\end{lemma}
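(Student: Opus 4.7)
The key observation is that whenever $\tau>0$, continuity forces $w'(\tau)\in\partial W$, so $|1-w'(\tau)^2|\ge 3/4$. I will combine this pointwise information with the non-local $L^2$ bound provided by Lemma~\ref{lem:non-local-lb}, applied at $a=\tau$: the function $g_\tau(r)=\int_\tau^r(1-w'^2)\,dt+c_\tau$ satisfies $\|g_\tau\|_{L^2(I_\tau)}^2\lesssim\tau E_h$. The case $\tau=0$ is trivial, so I first assume $\tau\in(0,1/2]$ so that $I_\tau=[\tau,2\tau]\subset(0,1]$.

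The plan is to identify a scale $s^\star$ on which $g_\tau$ is quantitatively monotone. The bending term controls $\int_{I_\tau}(w'')^2\,dr\le E_h/(h^2\tau)$, and since $w'$ stays in a single component of $W$ on $(\tau,2\tau]$, Cauchy--Schwarz together with $|w'(r)|+|w'(\tau)|\le 3$ gives
\[
|g_\tau'(r)-g_\tau'(\tau)|=|w'(r)^2-w'(\tau)^2|\le 3\sqrt{r-\tau}\,\sqrt{E_h/(h^2\tau)}.
\]
Setting $s^\star:=\min\bigl\{\tau,\,c\,h^2\tau/E_h\bigr\}$ for a sufficiently small numerical constant $c$, this shows that on $[\tau,\tau+s^\star]$ the derivative $g_\tau'$ keeps its sign and satisfies $|g_\tau'|\ge 1/2$ (and, by the same computation, $|g_\tau'|\lesssim 1$).

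Next I would show $\int_{[\tau,\tau+s^\star]}g_\tau^2\,dr\gtrsim (s^\star)^3$. This is the subtle step: monotonicity with a prescribed jump $V=s^\star/2$ alone is not enough, since a monotone step-like profile can concentrate its jump in a thin layer and still have small $L^2$ norm. The bounded-below slope $|g_\tau'|\ge 1/2$ is what rescues us. A short case split on the sign and size of $g_\tau(\tau)$, combined with the Lipschitz bound $|g_\tau'|\lesssim 1$, always produces a sub-interval of length $\gtrsim s^\star$ on which $|g_\tau|\gtrsim s^\star$, yielding the claimed $(s^\star)^3$ lower bound. Inserting into Lemma~\ref{lem:non-local-lb} then gives $(s^\star)^3\lesssim\tau E_h$.

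A case analysis on which branch of the minimum defines $s^\star$ finishes the argument: if $s^\star=\tau$ then $E_h\gtrsim\tau^2$, and if $s^\star=ch^2\tau/E_h$ then $E_h^4\gtrsim h^6\tau^2$, i.e.\ $E_h\gtrsim \tau^{1/2}h^{3/2}$. Either way, $\min\{\tau^2,\tau^{1/2}h^{3/2}\}\lesssim E_h$. For the remaining case $\tau\in(1/2,1]$, I would run the same argument with Lemma~\ref{lem:non-local-lb} applied at $a=1/2$ on $[1/2,1]$, working on whichever of $[\tau-s^\star,\tau]$ or $[\tau,\tau+s^\star]$ lies inside $[1/2,1]$; the geometric branch of the $s^\star$ split now yields $E_h\gtrsim 1\ge h^{3/2}$, while the other branch yields $E_h\gtrsim h^{3/2}$, both of which dominate $\min\{\tau^2,\tau^{1/2}h^{3/2}\}\le h^{3/2}$. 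The main obstacle, as noted, is step three: leveraging the bounded-below slope (not just monotonicity) of $g_\tau$ to extract the cubic $L^2$ lower bound.
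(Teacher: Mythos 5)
Your core argument is essentially the paper's: both proofs combine Lemma \ref{lem:non-local-lb} with the observation that on a suitably short interval adjacent to $\tau$ the bending energy forces $w'$ to vary little, so that $g'=1-w'^2$ has a definite sign and magnitude $\gtrsim 1$, whence $\int g^2\gtrsim(\text{length})^3$; the paper phrases the scale selection as a dichotomy on $\osc\{w'\}$ over $[\tau-\epsilon,\tau]$ followed by optimization in $\epsilon$, while you bake the optimal $\epsilon$ into the definition of $s^\star$, which is the same computation. Your worry about Step 3 is unfounded in the direction you fear: $|g'|\ge 1/2$ with constant sign gives $|g(x)|\ge\tfrac12|x-t_0|$ for the (at most one) zero $t_0$, hence the cubic bound directly, with no need for the Lipschitz upper bound or a case split on $g_\tau(\tau)$.

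The one genuine flaw is in your treatment of $\tau\in(1/2,1]$. There you must work on $[\tau-s^\star,\tau]$, i.e.\ to the \emph{left} of $\tau$, where nothing forces $w'$ into $W$; the bound $|w'(r)|+|w'(\tau)|\le 3$, which you use to convert smallness of $|w'(r)-w'(\tau)|$ into smallness of $|w'(r)^2-w'(\tau)^2|$, is therefore unjustified (indeed $w'(\tau)$ itself need not lie in $\overline W$ when $\tau=1$). The repair is easy and is what the paper does: from $w'(\tau)\notin W$ one has either $|w'(\tau)|\le 1/2$ or $|w'(\tau)|\ge 3/2$, and once $|w'(r)-w'(\tau)|\le 1/4$ on the interval this yields $|w'|\le 3/4$ or $|w'|\ge 5/4$ there, hence $|1-w'^2|\ge 7/16$ without any product decomposition. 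Note also that the paper avoids your case split on $\tau$ altogether by applying Lemma \ref{lem:non-local-lb} at $a=\tau/2$, so that the working interval $[\tau/2,\tau]$ sits inside $(0,1]$ for every $\tau\le 1$; adopting that choice would let you delete the second half of your argument.
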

\begin{proof}
We can assume $\tau>0$. Then necessarily $w'(\tau)\not\in W$, 
so that either $|w'(\tau)|\le 1/2$ or $|w'(\tau)|\ge 3/2$.

For  $0<\epsilon\leq\frac{\tau}{2}$ chosen below
we define $I_{t,\epsilon}=\left[t-\epsilon,t\right]$.
Note that $I_{\tau,\epsilon}\subset\left[\frac{\tau}{2},\tau\right]=I_{\frac{\tau}{2}}$.
If 
\begin{equation}
\osc{I_{\tau,\epsilon}}\left\{ w'\right\} \leq\frac{1}{4},\label{eq:osc_bd}
\end{equation}
then one has either $|w'|\le 3/4$ or $|w'|\ge 5/4$ on $I_{\tau,\epsilon}$.
Therefore $\abs{g'_{\frac{\tau}{2}}}\gtrsim1$ on $I_{\tau,\epsilon}$, and by continuity
$g'_{\frac{\tau}{2}}$ does not change sign.
Then, by Lemma
\ref{lem:non-local-lb}, 
\[
E_{h}\gtrsim\frac{1}{\tau}\int_{I_{\frac{\tau}{2}}}g_{\frac{\tau}{2}}^{2}dt\geq\frac{1}{\tau}\int_{I_{\tau,\epsilon}}g_{\frac{\tau}{2}}^{2}dt\gtrsim\frac{\epsilon^{3}}{\tau}.
\]
Now we turn to the case that  (\ref{eq:osc_bd}) does not hold.
By Jensen's inequality and the fundamental theorem of calculus,
\[
\frac{E_{h}}{h^{2}}\geq\int_{I_{\tau,\epsilon}}\abs{w''}^{2}rdr\gtrsim \tau\int_{I_{\tau,\epsilon}}\abs{w''}^{2}\,dr\gtrsim\frac{\tau}{\epsilon}\left(\osc{I_{\tau,\epsilon}}\left\{ w'\right\}\right)^2 .
\]
Therefore in this case
\[
E_{h}\gtrsim\frac{\tau h^{2}}{\epsilon}.
\]
Combining the two cases, we find that
\[
E_{h}\gtrsim \min\left\{ \frac{\epsilon^{3}}{\tau},\frac{\tau h^{2}}{\epsilon}\right\} \hskip5mm
\text{ for any } \epsilon\in (0, \frac{\tau}{2}]\,.
\]
We finally select $\epsilon=\min\{\frac{1}{2}\tau,\sqrt{h\tau}\}$ to obtain the desired lower
bound
\begin{align*}
E_{h} & \gtrsim\min\left\{ \tau^{2},h^{3/2}\tau^{1/2},\frac{\tau h^{2}}{\sqrt{h\tau}}\right\} 
%=\min\left\{ \tau^{2},h^{3/2}\tau^{1/2},h^{2} +  h^{3/2}\tau^{1/2}\right\} \\
  =\min\left\{ \tau^{2},h^{3/2}\tau^{1/2}\right\} .
\end{align*}

\end{proof}

\subsection{Partially inverted lower bound}

In this section, we discuss the case where $w'$ stays in a well --
as given in (\ref{eq:wells}) -- throughout the bulk of the domain.
The main case is the one in which this well is $W_+$, the other one is
easily excluded. By bulk of the domain we mean the interval $[\frac\delta8,1]$.
One important ingredient is an estimate of $w'$ (or of $g_a$) in terms of the energy
via a Gagliardo-Nirenberg inequality in Lemma \ref{lem:well-cvg}. Since one variant 
of the inequality we
use is not completely standard, we provide a short proof in the Appendix.

We shall use the definitions of $I_{a}$ and $g_{a}$ given in (\ref{eq:g_a}),
and the definition of $W$ given in (\ref{eq:wells}).
\begin{lemma}
\label{lem:well-cvg}\label{lem:osc-g}
Assume (\ref{eqstandingass}), and assume that $a\in (0,1/2)$ is such that $w'(r)\in W$ for all
$r\in I_{a}$. Then we have the bounds 
\begin{align*}
\norm{1-w'^{2}}_{L^{2}\left(I_{a}\right)}\lesssim & 
\left(\frac1{a^{1/2}}+\frac1{h^{1/2}}\right)E_{h}^{1/2}\,,\\
\osc{I_{a}}\left\{ g_{a}\right\} \lesssim &\left(1 + \left(\frac{a}{h}\right)^{1/4}\right)E_{h}^{1/2}\,.
\end{align*}
\end{lemma}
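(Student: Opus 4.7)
The plan is to derive both bounds from the identities $g_a'=1-w'^2$ and $g_a''=-2w'w''$, combining Lemma \ref{lem:non-local-lb} (which controls $\|g_a\|_{L^2(I_a)}$) with the bending energy (which controls $\|w''\|_{L^2(I_a)}$) via Gagliardo--Nirenberg--type interpolation on the interval $I_a$ of length $a$.

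First, I would record two input estimates. From Lemma \ref{lem:non-local-lb} one has $\|g_a\|_{L^2(I_a)}\lesssim a^{1/2}E_h^{1/2}$. Next, the assumption $w'(r)\in W$ on $I_a$ gives $|w'|\le 3/2$ pointwise, so $|g_a''|\le 3|w''|$; combining this with the bending contribution $h^2\int_{I_a}r(w'')^2\,dr\le E_h$ and the lower bound $r\ge a$ on $I_a$ yields
\[
\|g_a''\|_{L^2(I_a)}\;\lesssim\;\frac{E_h^{1/2}}{a^{1/2}h}.
\]

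For the first bound I would apply the one-dimensional interpolation inequality
\[
\|g_a'\|_{L^2(I_a)}^{2}\;\lesssim\;\|g_a\|_{L^2(I_a)}\,\|g_a''\|_{L^2(I_a)}+\frac{1}{a^{2}}\|g_a\|_{L^2(I_a)}^{2}
\]
(valid on an interval of length $a$; this is the variant the authors mention as being slightly non-standard and deferred to the appendix). Inserting the two input bounds produces
\[
\|1-w'^2\|_{L^2(I_a)}^{2}\;\lesssim\;a^{1/2}E_h^{1/2}\cdot\frac{E_h^{1/2}}{a^{1/2}h}+\frac{1}{a^{2}}\cdot aE_h\;\lesssim\;\frac{E_h}{h}+\frac{E_h}{a},
\]
and taking square roots gives the first estimate.

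For the oscillation bound I would exploit the zero-mean property $\fint_{I_a}g_a\,dr=0$ to conclude that $g_a$ vanishes somewhere on $I_a$, so $\operatorname{osc}_{I_a}g_a\le 2\|g_a\|_{L^\infty(I_a)}$. Then I would apply the Sobolev--Gagliardo--Nirenberg embedding
\[
\|g_a\|_{L^\infty(I_a)}\;\lesssim\;\|g_a\|_{L^2(I_a)}^{1/2}\,\|g_a'\|_{L^2(I_a)}^{1/2}+a^{-1/2}\|g_a\|_{L^2(I_a)},
\]
and substitute the bounds already obtained: the first summand becomes $\bigl(a^{1/2}E_h^{1/2}\bigr)^{1/2}\bigl(E_h/h+E_h/a\bigr)^{1/4}\lesssim E_h^{1/2}\bigl(1+(a/h)^{1/4}\bigr)$, and the second summand contributes $\lesssim E_h^{1/2}$; together they give the claimed $\bigl(1+(a/h)^{1/4}\bigr)E_h^{1/2}$.

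The only subtle point is the dependence of the constants in the two interpolation inequalities on the interval length $a$, which needs the right scaling to produce exactly the exponents advertised; this is precisely the estimate the authors flag as deserving a proof in the appendix. Everything else is a direct substitution.
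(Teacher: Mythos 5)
Your proof is correct and follows essentially the same route as the paper: $\|g_a\|_{L^2(I_a)}$ from Lemma \ref{lem:non-local-lb}, $\|g_a''\|_{L^2(I_a)}$ from the bending term together with $|w'|\le 3/2$ on the wells, and Gagliardo--Nirenberg interpolation on $I_a$ with the correct $|I_a|^{-1}$-scaled lower-order terms. The only cosmetic differences are that you unroll the oscillation estimate (the paper's Lemma \ref{lem:interp_osc}) into its two-step proof via $\|g_a\|_{L^\infty}\lesssim\|g_a\|_{L^2}^{1/2}\|g_a'\|_{L^2}^{1/2}+a^{-1/2}\|g_a\|_{L^2}$ — which is exactly how that appendix lemma is proved — and that the inequality the authors actually defer to the appendix is this oscillation bound, not the $L^2$ interpolation for $g_a'$, which they treat as standard.
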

\begin{proof}
To prove the first result, we apply the following standard Gagliardo-Nirenberg
interpolation inequality:
\[
\norm{g_{a}'}_{L^{2}\left(I_{a}\right)}\lesssim\norm{g_{a}}_{L^{2}\left(I_{a}\right)}^{1/2}\norm{g_{a}''}_{L^{2}\left(I_{a}\right)}^{1/2}+\frac{1}{\abs{I_{a}}}\norm{g_{a}}_{L^{2}\left(I_{a}\right)}.
\]
By the definition of $g_a$ and $W$, 
\begin{equation}
\norm{g_{a}''}_{L^{2}\left(I_{a}\right)}\lesssim\norm{w'w''}_{L^{2}\left(I_{a}\right)}\lesssim\norm{w''}_{L^{2}\left(I_{a}\right)}
\lesssim \left( \frac{E_h}{a h^2}\right)^{1/2}\,.
\label{eq:g_a''}
\end{equation}
Therefore, by Lemma \ref{lem:non-local-lb} 
we have that 
\[
\norm{g_{a}'}_{L^{2}\left(I_{a}\right)}\lesssim \left(\frac1{a^{1/2}}+\frac1{h^{1/2}}\right)E_{h}^{1/2}.
\]
The desired inequality follows from the definition of $g_{a}$.

To prove the second result, we apply the interpolation inequality
given in Lemma \ref{lem:interp_osc} in the appendix  to $g_{a}$:
\[
\osc{I_{a}}\left\{ g_{a}\right\} \lesssim\norm{g_{a}}_{L^{2}\left(I_{a}\right)}^{3/4}\norm{g_{a}''}_{L^{2}\left(I_{a}\right)}^{1/4}+\frac{1}{a^{1/2}}\norm{g_{a}}_{L^{2}\left(I_{a}\right)}.
\]
Recalling (\ref{eq:g_a''}) and Lemma \ref{lem:non-local-lb} concludes the proof.
\end{proof}
Since we only know that $w'$ resides in the wells in the bulk, we
will need a separate argument to control it near the origin. In the
next result we make some constants explicit, since they will be used
in the subsequent. 
\begin{lemma}
\label{lem:ctrl-origin} 
Assume (\ref{eqstandingass}), and let $0<s\leq t\leq\frac{1}{2}$. Then we
have the bounds 
\begin{align*}
\norm{w'}_{L^{1}\left([0,s]\right)} & \leq\frac{s}{h}E_{h}^{1/2}\\
\norm{w'}_{L^{1}\left([s,t]\right)} & \leq Ct^{1/2}\left(1 + {\log\frac1s}\right)^{1/4}E_{h}^{1/4}+2t.
\end{align*}
\end{lemma}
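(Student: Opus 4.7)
The first inequality is a one-shot Cauchy--Schwarz that pairs $|w'|$ with the bending term of the energy. Writing $|w'|=r^{1/2}\cdot (|w'|/r^{1/2})$ and using $h^{2}\int_{0}^{1}w'^{2}/r\,dr\leq E_{h}$, I would obtain
\[
\int_{0}^{s}|w'|\,dr\leq\Bigl(\int_{0}^{s}r\,dr\Bigr)^{1/2}\Bigl(\int_{0}^{s}\frac{w'^{2}}{r}\,dr\Bigr)^{1/2}\leq\frac{s}{\sqrt{2}}\cdot\frac{E_{h}^{1/2}}{h},
\]
which is the stated bound. This is the ``easy'' half of the lemma and uses the bending energy in a non-removable way.

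The second inequality is more delicate, because the right-hand side does not contain $h$ and so the proof must use only the membrane terms. My plan is to start from
\[
\int_{s}^{t}|w'|\,dr\leq (t-s)^{1/2}\Bigl(\int_{s}^{t}w'^{2}\,dr\Bigr)^{1/2}
\]
and to compute $\int_{s}^{t}w'^{2}\,dr$ by integrating the identity $w'^{2}=1+(v-u')$ with $v:=u'+w'^{2}-1$. This yields
\[
\int_{s}^{t}w'^{2}\,dr=(t-s)+\bigl(V(t)-V(s)\bigr)-\bigl(u(t)-u(s)\bigr),\qquad V(r):=\int_{0}^{r}v\,d\rho .
\]
The increment $|V(t)-V(s)|$ is then handled by another Cauchy--Schwarz taking advantage of the $r$-weight already present in the membrane term,
\[
|V(t)-V(s)|\leq\Bigl(\int_{s}^{t}\frac{d\rho}{\rho}\Bigr)^{1/2}\Bigl(\int_{s}^{t}\rho v^{2}\,d\rho\Bigr)^{1/2}\leq (\log(t/s))^{1/2}E_{h}^{1/2}.
\]
Using $\sqrt{a+b+c}\leq\sqrt{a}+\sqrt{b}+\sqrt{c}$, I obtain
\[
\int_{s}^{t}|w'|\,dr\leq (t-s)+(t-s)^{1/2}|V(t)-V(s)|^{1/2}+(t-s)^{1/2}|u(t)-u(s)|^{1/2},
\]
and the first two summands already produce, respectively, a part of the $2t$ term and the $Ct^{1/2}(1+\log\tfrac{1}{s})^{1/4}E_{h}^{1/4}$ term of the claim.

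The main obstacle is therefore controlling $u(t)-u(s)$ on both sides. From $\int_{s}^{t}w'^{2}\,dr\geq 0$ one direction is free: $u(t)-u(s)\leq (t-s)+V(t)-V(s)$. The reverse direction has to use the $\int u^{2}/r\,dr\leq E_{h}$ bound in an essential way. My plan here is to pick by a Chebyshev-type mean-value argument two ``good'' points $r_{0}\in[s/2,s]$ and $r_{1}\in[t/2,t]$ at which $|u(r_{i})|\leq C E_{h}^{1/2}$ (such points exist because $\|u\|_{L^{2}([a/2,a])}^{2}\leq aE_{h}$), and then transfer this pointwise control to the endpoints $s,t$ via the nonlocal estimate from Lemma~\ref{lem:non-local-lb}, applied on the dyadic intervals containing $s$ and $t$, coupled with the Cauchy--Schwarz bound on $V$ at the small scale. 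The net result is a bound of the form $|u(t)-u(s)|\leq C t+C(\log(1/s))^{1/2}E_{h}^{1/2}$, where the $Ct$ part is absorbed into the $2t$ of the conclusion and the $(\log)^{1/2}E_{h}^{1/2}$ part, after the square root, produces exactly the $(1+\log\tfrac{1}{s})^{1/4}E_{h}^{1/4}$ factor.
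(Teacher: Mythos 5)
Your first bound is exactly the paper's argument (Cauchy--Schwarz against the $h^2\int w'^2/r$ term), and the skeleton of your second bound also matches the paper's: Cauchy--Schwarz to reduce to $\int w'^2$, the membrane identity $w'^2=1+v-u'$ with $v=u'+w'^2-1$, a log-weighted Cauchy--Schwarz for the increment of $V$, and a Chebyshev selection of points where $|u|\lesssim E_h^{1/2}$. But your final step -- ``transferring'' the pointwise control of $u$ from the good points $r_0,r_1$ to the prescribed endpoints $s,t$ -- is a genuine gap. Writing $u(s)-u(r_0)=\int_{r_0}^{s}v\,dr-\int_{r_0}^{s}(w'^2-1)\,dr$, the second integral is precisely the kind of quantity you are trying to estimate, so the transfer is circular; and Lemma \ref{lem:non-local-lb} cannot close it, since it only gives $L^2(I_a)$ control of $g_a$ on a dyadic interval, not pointwise control of its increments. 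Upgrading to an oscillation bound (as in Lemma \ref{lem:osc-g}) would require $\norm{g_a''}_{L^2}\lesssim\norm{w''}_{L^2}$, i.e.\ $w'\in W$ on the interval, which is not assumed in this lemma.

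The paper sidesteps the issue with one observation you are missing: since $|w'|\ge0$, you may simply enlarge the interval of integration. Choosing $r_0\in[s/2,s]$ and $r_1\in[t,2t]$ (note: $[t,2t]$, not $[t/2,t]$, which is admissible because $t\le\frac12$), one has $[s,t]\subset[r_0,r_1]$ and hence
\[
\int_{s}^{t}\abs{w'}\,dr\leq\int_{r_0}^{r_1}\abs{w'}\,dr\leq\left(\int_{r_0}^{r_1}w'^2\,dr\right)^{1/2}\sqrt{2t}\,,
\]
so the identity for $\int w'^2$ and all evaluations of $u$ take place only at the good points, where $\abs{u(r_0)}+\abs{u(r_1)}\le 3E_h^{1/2}$ directly, with $\log(r_1/r_0)\le\log(2/s)$ supplying the logarithm. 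With this modification your argument becomes the paper's proof; as written, the endpoint-transfer step does not go through.
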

\begin{proof}
Using the Cauchy-Schwarz inequality,
\[
\int_{0}^{s}\abs{w'}\,dr\leq\left(\int_{0}^{s}\frac{w'^{2}}{r}\,dr\right)^{1/2}\left(\int_{0}^{s}r\,dr\right)^{1/2}\leq\frac{E_{h}^{1/2}}{h}s\,.
\]
This proves the first inequality.

The proof of the second inequality is more involved. We begin by controlling
the in-plane displacement, $u$, at conveniently chosen points. For all
$\epsilon>0$ we have the elementary estimate
\[
\L^1\left(\left\{ r\in\left[0,1\right]\ :\ \abs{u(r)}>\sqrt{\epsilon r}\right\} \right)<\frac{1}{\epsilon}\int_{0}^{1}\frac{u^{2}}{r}\,dr\leq\frac{1}{\epsilon}E_{h}\,,
\]
where $\L^1$ denotes the Lebesgue measure on $\R$.
Calling $\gamma=\epsilon^{-1}E_{h}$, we can rearrange to find that
\[
\L^1\left(\left\{ r\in\left[0,1\right]\ :\ \abs{u(r)}>\left(\frac{r}{\gamma}\right)^{1/2}E_{h}^{1/2}\right\} \right)<\gamma
\]
for all $\gamma>0$. Choosing $\gamma=\frac{s}{2}$ and $\gamma=t$,
we find that there exist $r_{0}\in\left[\frac{s}{2},s\right]$ and
$r_{1}\in\left[t,2t\right]$ such that 
\[
\abs{u\left(r_{0}\right)} + \abs{u\left(r_{1}\right)}\leq3E_{h}^{1/2}.
\]

Now we estimate $w'$. Using Cauchy-Schwarz and the definitions of
$r_{0},r_{1}$,
\[
\int_{s}^{t}\abs{w'}\,dr\leq\int_{r_{0}}^{r_{1}}\abs{w'}\,dr\leq\left(\int_{r_{0}}^{r_{1}}w'^{2}\,dr\right)^{1/2}\sqrt{2t}.
\]
By the triangle inequality and another application of Cauchy-Schwarz,
\begin{align*}
\int_{r_{0}}^{r_{1}}w'^{2}\,dr & \leq\left|\int_{r_{0}}^{r_{1}}w'^{2}-1+u'\,dr\right|+\left|r_{1}-r_{0}\right|+\abs{u\left(r_{1}\right)-u\left(r_{0}\right)}\\
 & \leq\left(\int_{r_{0}}^{r_{1}}\abs{w'^{2}-1+u'}^{2}r\,dr\right)^{1/2}\left(\log\frac{r_{1}}{r_{0}}\right)^{1/2}+\abs{r_{1}-r_{0}}+\abs{u\left(r_{1}\right)-u\left(r_{0}\right)}.
\end{align*}
By our choice of $r_{0},r_{1}$, and since $s\leq\frac{1}{2}$, we
conclude that
\[
\int_{r_{0}}^{r_{1}}w'^{2}\,dr\leq C E_{h}^{1/2}\left(1+\left(\log\frac1s\right)^{1/2}\right)+2t\,.
\]
Assembling the estimates, we find that
\begin{align*}
\int_{s}^{t}\abs{w'}\,dr & \leq\left(C E_{h}^{1/2}\left(1+{\log\frac1s}\right)^{1/2}+2t\right)^{1/2}\sqrt{2t}\\
 % & \leq\left[\sqrt{2}E_{h}^{1/4}\abs{\log\,s}^{1/4}+\sqrt{2}t^{1/2}+2E_{h}^{1/4}\right]\sqrt{2t}\\
 % & =2E_{h}^{1/4}t^{1/2}\left(\sqrt{2}+\abs{\log\,s}^{1/4}\right)+2t\\
 & \leq C E_{h}^{1/4}t^{1/2}\left(1 + {\log\frac1s}\right)^{1/4}+2t.
\end{align*}
This completes the proof.
\end{proof}
With these estimates in hand, we can prove a lower bound on the energy.
We start with the case where $w'\in W_{+}$ in the bulk. 
\begin{lemma}
\label{cor:goodwell_LB}
Assume (\ref{eqstandingass}), and additionally $h\leq\delta$ and  $w'\left(r\right)\in W_{+}$
for all $r\in\left[\frac{\delta}{8},1\right]$. Then,
\[
\delta\lesssim\frac{1}{h^{1/4}}E_{h}^{1/2}+\delta^{1/2}\left({\log\frac1h}\right)^{1/4}E_{h}^{1/4}+\frac{1 + {\log\frac1\delta}}{h}E_{h}\,.
\]
\end{lemma}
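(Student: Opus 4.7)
The plan is to start from the identity $\delta=\int_0^1(1-w')\,dr$ and split the domain at $r=\delta/8$, handling the inner piece via Lemma \ref{lem:ctrl-origin} and the outer piece via a dyadic decomposition combined with the oscillation bound of Lemma \ref{lem:osc-g}.

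For the inner piece I would further split $[0,\delta/8]$ at $r=h$ (legal since $h\leq\delta$; if $h>\delta/8$ use a single interval instead) and apply the two bounds of Lemma \ref{lem:ctrl-origin}: the first gives $\|w'\|_{L^1([0,h])}\leq E_h^{1/2}$, and the second with $s=h$, $t=\delta/8$ yields $\|w'\|_{L^1([h,\delta/8])}\lesssim \delta^{1/2}(\log(1/h))^{1/4}E_h^{1/4}+\delta$. Combining gives
\[
\Bigl|\int_0^{\delta/8}(1-w')\,dr\Bigr|\lesssim \delta+E_h^{1/2}+\delta^{1/2}(\log(1/h))^{1/4}E_h^{1/4},
\]
whose explicit $\delta$ will be absorbed into the left-hand side at the end.

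For the outer piece I cover $[\delta/8,1]$ by dyadic intervals $I_{a_k}=[a_k,2a_k]$, $a_k=2^k\delta/8$, $k=0,\ldots,K\sim\log(1/\delta)$. Writing $\phi:=w'-1$, the key observation is the identity $g_{a_k}'=1-(w')^2=-(w'-1)(w'+1)$, which together with $1+w'\geq 3/2$ on each $I_{a_k}$ gives $\phi=-g_{a_k}'/(1+w')$. Integration by parts yields
\[
\int_{I_{a_k}}\phi\,dr=-\Bigl[\frac{g_{a_k}}{1+w'}\Bigr]_{a_k}^{2a_k}-\int_{I_{a_k}}\frac{g_{a_k}\,w''}{(1+w')^2}\,dr.
\]
The boundary term is bounded by $\osc{I_{a_k}}\{g_{a_k}\}\lesssim (1+(a_k/h)^{1/4})E_h^{1/2}$ using Lemma \ref{lem:osc-g} and the mean-zero property of $g_{a_k}$. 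The volume term is estimated by Cauchy--Schwarz together with Lemma \ref{lem:non-local-lb} and the bending bound $\|w''\|_{L^2(I_{a_k})}^2\lesssim E_h/(a_kh^2)$, giving $\lesssim a_k^{1/2}E_h^{1/2}\cdot(E_h/(a_kh^2))^{1/2}=E_h/h$. Summing, the boundary contributions give $(\log(1/\delta)+h^{-1/4})E_h^{1/2}$ (the geometric sum $\sum(a_k/h)^{1/4}$ is dominated by its largest term, $\sim h^{-1/4}$), and the volume contributions give $(1+\log(1/\delta))E_h/h$, the third term of the claim.

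The final step is to absorb the excess $\log(1/\delta)E_h^{1/2}$ into $h^{-1/4}E_h^{1/2}$. Using $\delta\geq h$ and the elementary inequality $\sup_{h\in(0,1]}h^{1/4}\log(1/h)=4/e$ (attained at $h=e^{-4}$), one obtains $\log(1/\delta)\leq\log(1/h)\leq (4/e)h^{-1/4}$, whence the claim. The main obstacle I expect is the uniform-in-scale integration by parts: to keep the volume term at $E_h/h$ per scale (rather than $(a_k/h)^{1/4}E_h/h$) it is crucial to use Cauchy--Schwarz in $L^2$ rather than the cruder $L^\infty\cdot L^1$, since an extra $(a_k/h)^{1/4}$ factor there would sum to an unabsorbable $h^{-5/4}E_h$.
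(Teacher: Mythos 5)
Your proof is correct, and while it shares the paper's overall scaffolding (split at $r=\delta/8$, control the inner piece via Lemma \ref{lem:ctrl-origin}, treat the bulk by a dyadic decomposition built on the antiderivatives $g_a$ and the oscillation bound of Lemma \ref{lem:osc-g}), the central mechanism is genuinely different. The paper converts the linear defect into quadratic quantities through the pointwise inequality $1-w'\le\tfrac12\bigl((1-w'^2)+(1-w'^2)^2\bigr)$, valid on $W_+$; telescoping $\int(1-w'^2)$ then yields the oscillation contributions, while the squared term is handled by the first (Gagliardo--Nirenberg) inequality of Lemma \ref{lem:well-cvg}, giving $E_h/h$ per scale. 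You instead write $1-w'=g_a'/(1+w')$, using that $1+w'\ge 3/2$ on the well, and integrate by parts: your boundary terms reproduce the paper's oscillation contribution, and your bulk term $\int_{I_a} g_a\,w''/(1+w')^2$, estimated by Cauchy--Schwarz as $\norm{g_a}_{L^2}\norm{w''}_{L^2}\lesssim a^{1/2}E_h^{1/2}\cdot(E_h/(ah^2))^{1/2}=E_h/h$, replaces the paper's quadratic term --- so you bypass the $L^2$ bound on $1-w'^2$ entirely and need only Lemma \ref{lem:non-local-lb} plus the oscillation estimate. Your remark that the bulk term must be done in $L^2\times L^2$ rather than $L^\infty\times L^1$ is exactly the right point. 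Two small matters to tidy when writing this out: (a) in the boundary term $\bigl[g_a/(1+w')\bigr]_{a}^{2a}$ the two endpoint values carry different denominators, so you need $\sup_{I_a}\abs{g_a}\lesssim\osc{I_a}\{g_a\}$ (which the mean-zero normalization of $g_a$ supplies, as you note), not merely the difference $g_a(2a)-g_a(a)$; and (b) the explicit multiples of $\delta$ you must absorb ($\delta/8$ from $\int_0^{\delta/8}1\,dr$ plus the explicit $2t=\delta/4$ from Lemma \ref{lem:ctrl-origin}) sum to $3\delta/8<\delta$, so the absorption closes, but this relies on the explicit constant $2t$ in that lemma rather than on a generic $\lesssim\delta$.
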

\begin{proof}
First, we observe that the elementary inequality
\[
X\leq\frac{1}{2}\left(\left(2X-X^{2}\right)+X^{2}\left(2-X\right)^{2}\right)
\]
holds for all $X\leq1$. Therefore, if $w'\in W_{+}$ then we may
choose $X=1-w'$ to conclude the bound
\[
1-w'\leq\frac{1}{2}\left(1-w'^{2}+\left(1-w'^{2}\right)^{2}\right).
\]
Now let $N\in\mathbb{N}$ be such that $a=\frac{1}{2^{N}}$ satisfies
$2^{2}a<\delta\leq2^{3}a$. Then, we have that
\begin{align*}
\delta & =\int_{0}^{1}1-w'\,dt\\
 & =\int_{0}^{a}1-w'\,dt+\int_{a}^{1}1-w'\,dt\\
 & \leq\int_{0}^{a}1-w'\,dt+\int_{a}^{1}\frac{1}{2}\left(1-w'^{2}+\left(1-w'^{2}\right)^{2}\right)\,dt\\
 & \leq\frac{\delta}{4}+\int_{0}^{a}\abs{w'}\,dt+\frac{1}{2}\left|\int_{a}^{1}1-w'^{2}\,dt\right|+\frac{1}{2}\int_{a}^{1}\abs{1-w'^{2}}^{2}\,dt\\
 & =\frac{\delta}{4}+\left(i\right)+\abs{\left(ii\right)}+\left(iii\right).
\end{align*}
Consider the term $\left(i\right)$. Applying Lemma \ref{lem:ctrl-origin}
with the choices $s=\frac{h}{4}$, $t=\frac{\delta}{4}$, we conclude
that 
\begin{align*}
\left(i\right) & =\int_{0}^{a}\abs{w'}\,dt\leq\int_{0}^{\frac{h}{4}}\abs{w'}\,dt+\int_{\frac{h}{4}}^{\frac{\delta}{4}}\abs{w'}\,dt\\
 & \leq\frac{1}{4}E_{h}^{1/2}+C\delta^{1/2}\left(1 + {\log\frac4h}\right)^{1/4}E_{h}^{1/4}+\frac{\delta}{2}.
\end{align*}
Therefore, after a rearrangement, we find that
\begin{equation}
\delta\lesssim E_{h}^{1/2}+\delta^{1/2}\left({\log\frac1h}\right)^{1/4}E_{h}^{1/4}+\abs{\left(ii\right)}+\left(iii\right).\label{eq:estondelta}
\end{equation}

Now we bound $\left(ii\right)$ and $\left(iii\right)$. We have that
\[
\left(ii\right)=\frac{1}{2}\int_{a}^{1}(1-w'^{2})dt=\frac{1}{2}\sum_{j=1}^{N}\int_{2^{-j}}^{2^{-\left(j-1\right)}}(1-w'^{2})dt=\frac{1}{2}\sum_{j=1}^{N}g_{a_{j}}\left(a_{j-1}\right)-g_{a_{j}}\left(a_{j}\right)
\]
where $a_{j}=2^{-j}$. By Lemma \ref{lem:osc-g} and since $a_{j}\gtrsim\delta\geq h$,
\[
\abs{g_{a_{j}}\left(a_{j-1}\right)-g_{a_{j}}\left(a_{j}\right)}\lesssim\left(\frac{a_{j}}{h}\right)^{1/4}E_{h}^{1/2}.
\]
Therefore, by the triangle inequality 
\[
\abs{\left(ii\right)}\lesssim\sum_{j=1}^{N}\left(\frac{a_{j}}{h}\right)^{1/4}E_{h}^{1/2}\lesssim\frac{1}{h^{1/4}}E_{h}^{1/2}.
\]
In a similar way, we can use Lemma \ref{lem:well-cvg} to conclude
that
\[
\left(iii\right)=\frac{1}{2}\sum_{j=1}^{N}\int_{I_{a_{j}}}\abs{1-w'^{2}}^2dt\lesssim\frac{N}{h}E_{h}\lesssim\frac{1 + 
{\log\frac1\delta}}{h}E_{h}.
\]
Combining the estimates on $\left(ii\right),\left(iii\right)$ with
(\ref{eq:estondelta}) above gives that
\[
\delta\lesssim\delta^{1/2}\left({\log\frac1h}\right)^{1/4}E_{h}^{1/4}+\frac{1}{h^{1/4}}E_{h}^{1/2}+\frac{1 + {\log\frac1\delta}}{h}E_{h}.
\]

\end{proof}
Now we deal with the case where $w'\in W_{-}$ in the bulk. As one
might expect, this case leads to relatively large energy.
\begin{lemma}
\label{cor:badwell_LB}
Assume (\ref{eqstandingass}), and additionally $h\leq\delta$ and  $w'\left(r\right)\in W_{-}$
for all $r\in\left[\frac{\delta}{8},1\right]$. Then,
\[
\min\left\{ \frac{1}{\delta^{2}{\log\frac1h}},1\right\} \lesssim E_{h}.
\]
\end{lemma}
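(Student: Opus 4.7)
The overall plan is to exploit the tension between two facts: the pointwise bound $w'<-1/2$ throughout the bulk $[\delta/8,1]$ forces $w(\delta/8)$ to be of order unity (because $w(1)=1-\delta$), while the Dirichlet condition $w(0)=0$ makes it expensive to bridge this gap over the short interval $[0,\delta/8]$. The quantitative cost of the latter is precisely what Lemma \ref{lem:ctrl-origin} measures.

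First I would integrate the well constraint: since $w'(r)<-\tfrac12$ for $r\in[\delta/8,1]$, one gets
$$w(\delta/8) > (1-\delta) + \tfrac{1}{2}(1-\delta/8) = \tfrac{3}{2} - \tfrac{17\delta}{16} \geq \tfrac{7}{16}$$
for all $\delta\in[0,1]$. Thus $w(\delta/8)\gtrsim 1$ uniformly.

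Next I would estimate $|w(\delta/8)| = |w(\delta/8)-w(0)| \leq \|w'\|_{L^1([0,\delta/8])}$ from above via Lemma \ref{lem:ctrl-origin}, splitting on the size of $h$ relative to $\delta/8$. In the easy sub-case $\delta\leq 8h$, the first inequality alone (with $s=\delta/8$) gives $\tfrac{7}{16} \lesssim (\delta/h)E_h^{1/2}$, hence $E_h \gtrsim h^2/\delta^2 \gtrsim 1$, which is already stronger than the claim. In the main sub-case $\delta>8h$, I would split $[0,\delta/8] = [0,h]\cup[h,\delta/8]$ and apply both inequalities with $s=h$ and $t=\delta/8$, obtaining
$$\tfrac{7}{16} \leq E_h^{1/2} + C\delta^{1/2}\bigl(1+\log\tfrac{1}{h}\bigr)^{1/4} E_h^{1/4} + \tfrac{\delta}{4}.$$
Using $\delta\leq 1$ to absorb the $\delta/4$ term leaves
$$\tfrac{3}{16} \lesssim E_h^{1/2} + \delta^{1/2}\bigl(\log\tfrac{1}{h}\bigr)^{1/4} E_h^{1/4}.$$

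I would finish by a simple dichotomy on which term on the right is larger: either $E_h^{1/2} \gtrsim 1$, giving $E_h \gtrsim 1$; or $\delta^{1/2}(\log(1/h))^{1/4}E_h^{1/4} \gtrsim 1$, giving $E_h \gtrsim 1/(\delta^2\log(1/h))$. Either way $E_h \gtrsim \min\{1,1/(\delta^2\log(1/h))\}$, as desired. No step should present a genuine obstacle, since all of the hard analytic work is already packaged in Lemma \ref{lem:ctrl-origin}; the only mild care required is in the case split on $h$ versus $\delta/8$, which ensures the choice $s=h\leq t$ needed to legitimately invoke the second bound of that lemma.
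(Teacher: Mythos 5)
Your proposal is correct and follows essentially the same route as the paper: both derive $\int_0^{\delta/8}|w'|\gtrsim 1$ from the well constraint and the boundary conditions, then bound that integral from above via the two inequalities of Lemma \ref{lem:ctrl-origin} (the paper takes $s=h/8$, $t=\delta/8$, using $h\le\delta$ directly, whereas you take $s=h$ and add a harmless extra sub-case $\delta\le 8h$), and conclude by the same dichotomy.
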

\begin{proof}
Using the definition of $W_{-}$ and the boundary conditions on $w$,
we have that
\[
1-\delta=\int_{0}^{1}w'dt=\int_{0}^{\frac{\delta}{8}}w'dt+\int_{\frac{\delta}{8}}^{1}w'dt\leq\int_{0}^{\frac{\delta}{8}}\abs{w'}\,dt-\frac{1}{2}\left(1-\frac{\delta}{8}\right).
\]
Then, since $\delta\leq1$, 
\[
\frac{7}{16}\leq\int_{0}^{\frac{\delta}{8}}\abs{w'}\,dt.
\]
Now we argue as in the proof above: using Lemma \ref{lem:ctrl-origin}
with the choices $s=\frac{h}{8}$, $t=\frac{\delta}{8}$, and again
using that $\delta\leq1$, it follows that 
\begin{align*}
\int_{0}^{\frac{\delta}{8}}\abs{w'}\,dt & =\int_{0}^{\frac{h}{8}}\abs{w'}\,dt+\int_{\frac{h}{8}}^{\frac{\delta}{8}}\abs{w'}\,dt\\
 & \leq\frac{1}{8}E_{h}^{1/2}+C\delta^{1/2}\left(1 + {\log\left(\frac8{h}\right)}\right)^{1/4}E_{h}^{1/4}+\frac{1}{4}.
\end{align*}
Hence, 
\[
1\lesssim E_{h}^{1/2}+\delta^{1/2}\left({\log\frac1h}\right)^{1/4}E_{h}^{1/4}
\]
and this implies the result.
\end{proof}

\subsection{Conical lower bound}

We require an additional lower bound to complete the proof of Theorem
\ref{thm:LB}. The bound given here is the well-known scaling law
for an unconstrained regular cone. Taking advantage of radial symmetry, we give
a quick proof using only the techniques developed above.
\begin{lemma}
\label{lem:conicalLB}
Assume (\ref{eqstandingass}). Then
\[
h^{2}\log\frac{1}{h}\lesssim E_{h}\,.
\]
\end{lemma}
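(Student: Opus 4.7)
The plan is to split into two cases based on the size of $\tau = \tau_{w'}$ defined in (\ref{eq:furthestpt}), using the bending term $h^2(w')^2/r$ directly in one case and Lemma \ref{cor:invertedLB} in the other. By definition of $\tau$, one has $|w'| \geq 1/2$ on $(\tau, 1]$, so for any $s \in (\tau, 1]$,
\[
E_h \geq h^{2}\int_{s}^{1}\frac{(w')^{2}}{r}\,dr \geq \frac{h^{2}}{4}\log\frac{1}{s}.
\]

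Fix a constant $\alpha \in (0, 1)$ to be chosen. First I would treat the case $\tau \leq h^{\alpha}$: since $\alpha < 1$ and $h \leq 1/2$, we have $h^{\alpha} \geq h$, so plugging $s = h^{\alpha}$ into the displayed bound immediately yields $E_h \geq (\alpha/4)\, h^{2}\log(1/h)$. Next I would handle the complementary case $\tau > h^{\alpha}$. Here in particular $\tau > h$, so Lemma \ref{cor:invertedLB} gives
\[
E_h \gtrsim \min\bigl\{\tau^{1/2} h^{3/2},\, \tau^{2}\bigr\} = \tau^{1/2} h^{3/2} \geq h^{(3+\alpha)/2}.
\]

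To conclude I would invoke the elementary inequality $y^{\beta} \geq e\beta \log y$, valid for all $y, \beta > 0$ (which follows by minimizing $y^{\beta} - e\beta \log y$: the only critical point is at $y = e^{1/\beta}$, where the function equals $0$). Applied with $y = 1/h$ and $\beta = (1-\alpha)/2$, this gives $h^{-(1-\alpha)/2} \geq \tfrac{e(1-\alpha)}{2} \log(1/h)$, hence $h^{(3+\alpha)/2} \geq \tfrac{e(1-\alpha)}{2} \cdot h^{2}\log(1/h)$, closing the second case. Choosing, say, $\alpha = 1/2$ and combining the two cases yields $E_h \gtrsim h^{2}\log(1/h)$ with a universal constant.

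The only mild obstacle is the moderate-$h$ regime within the second case. The inequality $h^{(3+\alpha)/2} \gtrsim h^{2}\log(1/h)$ is not asymptotic: the two sides are of comparable order for $h \in (0, 1/2]$, so one needs the sharp form of the elementary inequality rather than the weaker statement that the ratio tends to infinity as $h \to 0$. The boundary case $\tau = 1$ is treated identically, as it already lies in the second case for any $\alpha$.
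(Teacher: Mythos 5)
Your proof is correct and follows essentially the same strategy as the paper's: split on the size of $\tau$, use the $h^2(w')^2/r$ bending term when $\tau$ is small, and invoke Lemma \ref{cor:invertedLB} when $\tau$ is large. The only difference is cosmetic — the paper takes the threshold $\tau_h = h(\log\frac1h)^2$, which makes the second case land exactly on $h^2\log\frac1h$ without needing your elementary inequality $y^\beta\geq e\beta\log y$, whereas your power-law threshold $h^\alpha$ requires that extra (correctly handled) step.
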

\begin{remark}
We note here that this result can also be proved as a special case of the second
author's recent work on the scaling law of a regular cone \cite{olbermann2015energy}
which does not assume radial symmetry.\end{remark}
\begin{proof}
Let $\tau$ be as defined in (\ref{eq:furthestpt}). Let $\tau_{h}=h\left({\log\frac1h}\right)^{2}$
and consider the cases $\tau<\tau_{h}$ and $\tau\geq\tau_{h}$. In
the first case, we observe that $\abs{w'}\gtrsim1$ on $\left[\tau_{h},1\right]$
since $w'\in W$ there. Therefore, 
\[
\frac{E_{h}}{h^{2}}\geq\int_{\tau_{h}}^{1}\abs{w'}^{2}\frac{1}{r}\,dr\gtrsim\log\frac{1}{\tau_{h}}.
\]
Since
\[
\log\frac{1}{\tau_{h}}=\log\frac{1}{h}-2\log\log\frac{1}{h}\gtrsim\log\frac{1}{h},
\]
we conclude the lower bound. In the second case, $\tau\geq\tau_{h}$,
and we can apply  Lemma \ref{cor:invertedLB} to prove that
\[
E_{h}\gtrsim\min\left\{ \tau_{h}^{1/2}h^{3/2},\tau_{h}^{2}\right\} \gtrsim h^{2}{\log\frac1h}.
\]

\end{proof}

\subsection{Proof of Proposition \ref{thm:LB}}

Now we assemble the previous estimates to complete the proof of Proposition
\ref{thm:LB}.
\begin{proof}[Proof of Proposition \ref{thm:LB}]
By Lemma \ref{lem:conicalLB}
we have
\[
h^{2}\log\frac{1}{h}\lesssim E_{h}\,.
\]
If $\delta< h$ the proof is concluded, since both $ \delta^{2}h^{1/2}$ and $\delta^{1/2}h^{3/2}$ are in this case smaller than $h^2$. 
Therefore it suffices to prove that 
\begin{equation}\label{eqclaimprop2}
\min\left\{ \delta^{2}h^{1/2},\delta^{1/2}h^{3/2}\right\} \lesssim E_{h} \hskip1cm
\text{ whenever $\delta\geq h$.} 
\end{equation}
Let $\tau$ be as defined in (\ref{eq:furthestpt}).
Consider the cases $\tau\geq\frac{\delta}{8}$ and $\tau\leq\frac{\delta}{8}$.
In the first case, we apply Lemma \ref{cor:invertedLB} to find
that
\[
E_{h}\gtrsim\min\left\{ \delta^{1/2}h^{3/2},\delta^{2}\right\} =\delta^{1/2}h^{3/2}.
\]
In the second case, we have that $w'\left(r\right)\in W$ for all
$r\in\left[\frac{\delta}{8},1\right]$. Since $w'$ is continuous (see Lemma \ref{lem:radsymmspace} in the appendix),  either  $w'\left(r\right)\in W^{+}$
for all $r\in\left[\frac{\delta}{8},1\right]$, or  $w'\left(r\right)\in W^{-}$
for all $r\in\left[\frac{\delta}{8},1\right]$. 

Suppose that $w'\in W^{+}$ on $\left[\frac{\delta}{8},1\right]$.
Then by Lemma \ref{cor:goodwell_LB},
\[
\delta\lesssim\frac{1}{h^{1/4}}E_{h}^{1/2}+\delta^{1/2}\left( {\log\frac1h}\right)^{1/4}E_{h}^{1/4}+\frac{1 + {\log\frac1\delta}}{h}E_{h}.
\]
It immediately follows that
\begin{align*}
E_{h} & \gtrsim\min\left\{ \delta^{2}h^{1/2},\frac{\delta^{2}}{
{\log\frac1h}},\frac{\delta h}{1 + {\log\frac1\delta}}\right\} \\
 & \gtrsim\min\left\{ \delta^{2}h^{1/2},\frac{\delta h}{1 + {\log\frac1\delta}}\right\} .
\end{align*}
On the other hand, if $w'\in W^{-}$ on $\left[\frac{\delta}{8},1\right]$,
then we may apply Lemma \ref{cor:badwell_LB} to find that
\[
E_{h}\gtrsim\min\left\{ \frac{1}{\delta^{2}{\log\frac1h}},1\right\} .
\]

Combining all the cases, we find that 
\begin{align*}
E_{h} & \gtrsim\min\left\{ \delta^{1/2}h^{3/2},\delta^{2}h^{1/2},\frac{\delta h}{1 + {\log\frac1\delta}},\frac{1}{\delta^{2}{\log\frac1h}},1\right\}\,.
% & \gtrsim\min\left\{ \delta^{1/2}h^{3/2},\delta^{2}h^{1/2},\frac{\delta h}{1 + {\log\frac1\delta}}\right\} \\
% & \gtrsim\min\left\{ \delta^{1/2}h^{3/2},\delta^{2}h^{1/2}\right\} 
\end{align*}
Since $\min\{\delta^{1/2}h^{3/2},\delta^{2}h^{1/2}\}\lesssim h \delta^{5/4} \lesssim h \delta/(1+\log\frac1\delta)$, we have proved \eqref{eqclaimprop2}. 
\end{proof}

\section{Proof of Theorem \ref{thm:ridgeconst}}
\label{secproof2}
The main idea in the proof of Theorem \ref{thm:ridgeconst} is to construct a piecewise affine map with the appropriate 
large-scale behavior, and then to replace the sharp edges with smooth ridges. The same procedure was used 
for a single fold in \cite{Venkataramani04},
for generic compressive deformations in nonlinear elasticity in \cite{Maggi08}, and adapted to the von K\'arm\'an
setting in \cite{BedrossianKohn2015}, albeit with a restriction on the size of
the angle of the fold. In Lemma \ref{lemma:ridge} below, we remove this
restriction. 

For this construction it is useful to use a  reference configuration without intrinsic curvature. 
In the nonlinear setting this should be imagined as the flat graphene sheet with
a wedge removed. In the linear theory, the reference configuration still is the whole plane, but the in-plane displacement has a jump. Precisely, we define
$V(x)=U(x)+ \frac12\theta x^\bot$, where $\theta\in(-\pi,\pi)$ is the angle such that $ x=|x|(\cos\theta,\sin\theta)$. This introduces a singularity on
the ray $\Sigma=(-\infty,0]e_1$.
The corresponding change of variables in the energy leads to the functional
\begin{equation}
  \bar E_h[V,W,\Omega]=\int_{\Omega}\left|DV+DV^T +DW\otimes DW\right|^2+h^2|D^2W|^2\d x\,.
\end{equation}
If $V$ and $W$ obey the boundary conditions $(V^+-V^-)(x_1,0)=(0,\pi x_1)$, $W^+(x_1,0)=W^-(x_1,0)$, and $DW^+(x_1,0)=DW^-(x_1,0)$ for all $x_1<0$,
then one can reverse the transformation and obtain admissible functions for $\tilde E_h$,
satisfying
$\tilde E_h(U,W)=\bar E_h[V,W,B_1\setminus\Sigma]$.

\begin{figure}
 \centerline{\includegraphics[width=8cm]{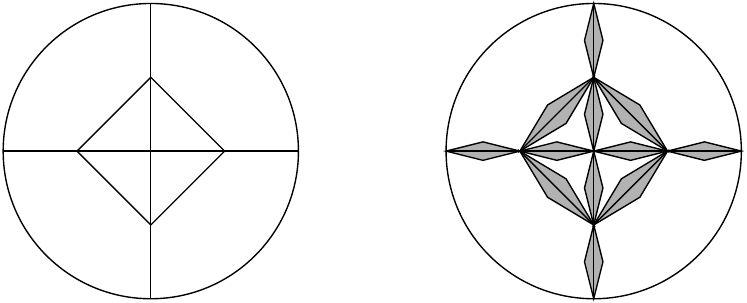}}
 \caption{Left: sketch of the regions where $V$ and $W$ are affine. Right: the twelve quadrilaterals where they are modified to obtain
 $V_h$, $W_h$.}
 \label{figmesh}
\end{figure}

\begin{proof}[Proof of Theorem \ref{thm:ridgeconst}]
By the definition of $\tilde\A_\delta$
it suffices to consider $\delta=1/2$. Further, we can assume that  $h$ is  small, otherwise one can set $U=0$ and $W=0$.
We define, for $\alpha>0$ chosen below,
 \begin{equation}
  W(x)=\alpha \min\{|x_1|+|x_2|, 1-|x_1|-|x_2|\}
 \end{equation}
and
\begin{equation}
 V(x)=\begin{cases}
           \displaystyle -\frac{\alpha^2}2  (x_1+x_2,x_1+x_2) & \text{ if } x_1\geq 0 \text{ and } x_2\geq 0\\[2mm]
           \displaystyle -\frac{\alpha^2}2  (x_1+x_2,-3x_1+x_2) & \text{ if } x_1<0 \text{ and } x_2\geq 0\\[2mm]
           \displaystyle -\frac{\alpha^2}2  (x_1-3x_2,5x_1+x_2) & \text{ if } x_1<0 \text{ and } x_2<0\\[2mm]
           \displaystyle -\frac{\alpha^2}2  (x_1-3x_2,x_1+x_2) & \text{ if } x_1\geq 0 \text{ and } x_2<0\,.
           \end{cases}
\end{equation}
One can check that $DV+DV^T+DW\otimes DW=0$, and that $V$ is continuous for $x_1=0$,
as well as for $x_2=0$, $x_1>0$. For $x_2=0$ and $x_1<0$ one obtains a jump of $4\alpha^2 (0,x_1)$. Therefore,
choosing $\alpha=\sqrt{\pi/4}$ ensures that $V$ is continuous on $\R^2$.
 
This configuration has gradients which jump across 12 segments. Around each segment we shall use a smooth ridge configuration as in Lemma \ref{lemma:ridge}.
We select 12 convex quadrilaterals with the mentioned segments as diagonals. We define $V_h$, $W_h$ as $V, W$ on the rest of the unit ball, 
and as the result of the Lemma in the 10 quadrilaterals which do not intersect the jump set of $V$.
For the remaining two we consider instead the function $V-4\alpha^2x^\perp\chi_{x_2>0}$, construct $V_h$ and $W_h$, and then add $4\alpha^2x^\perp\chi_{x_2>0}$ 
to the result.
At this point we define $U_h(x)=V_h(x)-\frac12 \theta x^\bot$. 

Now let $z_1,\dots,z_9\in \overline{B_1}$ denote the endpoints of the 12 segments.
The energy satisfies
\begin{equation}
 \bar E_h[V_h, W_h, B_1\setminus(\Sigma \cup \bigcup_{i=1}^9 B_h(z_i))] \le C h^{5/3}.
\end{equation}
It remains  to smoothen our construction on $\bigcup_{i=1}^9 B_h(z_i)$. 
For notational simplicity we discuss only the treatment of $z_1=(0,0)$.
Let $\psi_h\in C^\infty_c(B_{h})$ be such that $\psi=1$ on $B_{h/2}$. Let
$\varphi\in C^\infty_c(B_1)$ be a standard mollifier, and let $\varphi_h(\cdot)=h^{-2}\varphi(\cdot/h)$.
We set $\tilde W_h=\psi_h (W_h\ast \varphi_h) + (1-\psi_h)W_h$.
Then $\|D^2\tilde W_h\|^2_{L^2(B_{h})} \le C$. At the same time, $DW_h$ is uniformly bounded, therefore 
$\|DW_h\otimes DW_h-D\tilde W_h\otimes D\tilde W_h\|^2_{L^2(B_{h})} \le C h^2$. We conclude that
\begin{equation}
\bar  E_h[V_h, \tilde W_h, B_1\setminus \Sigma] \le C h^{5/3}.
\end{equation} 
This proves the theorem.
\end{proof}
Given two or more points $a_1, \dots, a_n$ we denote by $[a_1 \dots a_n]$ their convex envelope.

\begin{lemma}\label{lemma:ridge}
 Let $[abcd]\subset \R^2$ be a nondegenerate quadrilateral with diagonals $[ac]$
 and $[bd]$, contained in the square with diagonal $[ac]$. Furthermore, let
 $(V,W):[abcd]\to \R^2\times \R$ be continuous functions,
 affine on $[abc]$ and $[adc]$, with $DV+DV^T+DW\otimes DW=0$. Then for all $h\in (0,l/8)$ there are $V^h, W^h\in C^2([abcd]\setminus\{a,c\};\R^2\times \R)$ 
 such that $V=V^h$, $W=W^h$, $DW=DW^h$ on $\partial[abcd]\setminus\{a,c\}$, $|DV^h|+|DW^h|\le C$ and
 \begin{equation}
  \bar E_h[V^h, W^h, [abcd]\setminus (B_h(a) \cup B_h(c))]\le C h^{5/3}l^{1/3}\,,
 \end{equation}
where $l=|a-c|$, and
the constant $C$ may depend on the angles of $[abcd]$, and on $V$ and $W$, but not on $h$.
\end{lemma}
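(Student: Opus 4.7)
After an affine change of coordinates I will place $a=(0,0)$ and $c=(l,0)$, so that the ridge is $[ac]$ and the quadrilateral lies in the square $\{|x_1|+|x_2|\leq l\}$, with $b$ above and $d$ below the $x_1$-axis. Continuity of $W$ across $[ac]$ forces the tangential component of $DW$ to match, so I write $DW = (A_1, A_2)$ on $[abc]$ and $(A_1, A_2')$ on $[adc]$, and set $\beta := A_2 - A_2'$. The von K\'arm\'an compatibility $2\sym DV + DW\otimes DW = 0$ on each triangle, combined with continuity of $V$ on $[ac]$, fixes $DV$ on each triangle up to a single common skew-symmetric parameter.

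Fix a smoothing width $\ell \in (8h, l/16)$, to be optimized, and let $\Sigma_\ell := \{x\in[abcd]:|x_2|<\ell\}$. Select a $C^2$ profile $\Phi:\R\to \R$ matching $A_2 x_2$ for $x_2\geq \ell$ and an affine extension of $A_2' x_2$ for $x_2\leq-\ell$, with matching first derivatives at $x_2=\pm\ell$ and $|\Phi''|\lesssim \beta/\ell$; the value $I_\Phi := \int_{-\ell}^\ell (\Phi')^2\, dx_2$ remains a free parameter subject only to the Cauchy--Schwarz lower bound. Define $W^h := W$ outside $\Sigma_\ell$ and $W^h(x) := A_1 x_1 + \Phi(x_2) + \mathrm{const}$ inside; then $W^h\in C^2$ and the bending energy is $\int h^2|D^2 W^h|^2 \lesssim h^2\beta^2 l/\ell$. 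For the in-plane field, I use that $W^h$ depends only on $x_2$ in $\Sigma_\ell$, so $\det D^2 W^h = 0$ and the Saint-Venant obstruction $\partial_{22}S_{11}+\partial_{11}S_{22}-2\partial_{12}S_{12} = -2\det D^2 W^h$ for $S := DW^h\otimes DW^h$ vanishes. Consequently, the system $2\sym DV^h = -S$ can be solved by direct integration in $\Sigma_\ell$, producing $V^h$ modulo a three-parameter rigid motion; outside $\Sigma_\ell$ I set $V^h := V$. Matching $V^h$ to $V$ on $\{x_2=\pm\ell\}$ imposes several scalar conditions, and after using continuity of $V$ on $[ac]$ to eliminate the redundant ones, the three rigid-motion parameters plus the free $I_\Phi$ satisfy all but a single scalar residual whose size is determined by the given data.

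I absorb the remaining residual by a localized corrector supported in $\Sigma_\ell$, and taper the construction by a smooth cutoff outside $B_h(a)\cup B_h(c)$ to ensure $V^h = V$ and $(W^h, DW^h) = (W, DW)$ on $\partial[abcd]\setminus\{a,c\}$, with the cutoff regions contributing at subleading order since they have area $\lesssim h\ell$. A careful design of the corrector --- balancing its bending and membrane contributions against the strip bending so that the Lobkovsky scaling is realized --- yields a total energy $\lesssim Ch^{5/3}l^{1/3}$ at the optimal width $\ell\sim h^{1/3}l^{2/3}$, with $C$ depending on the angles of $[abcd]$ and on $DV, DW$ but not on $h$. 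The hardest part is the over-determined matching of $V^h$ across the two strip edges together with preservation of the boundary data on $\partial[abcd]$: this is precisely where \cite{BedrossianKohn2015} required a restriction on the fold angle, and it is the combination of the profile freedom in $I_\Phi$ and a carefully chosen internal corrector that removes that restriction here.
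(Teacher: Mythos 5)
There is a genuine gap, and it sits exactly where the difficulty of the lemma lives: at the tips $a$ and $c$. You smooth the fold in a strip $\Sigma_\ell=\{|x_2|<\ell\}$ of \emph{constant} width $\ell\sim h^{1/3}l^{2/3}$. Since $\ell\gg h$, this strip necessarily meets the sides $[ab],[ad],[bc],[cd]$ in neighborhoods of $a$ and $c$ of diameter $\sim\ell$, far outside $B_h(a)\cup B_h(c)$; on those portions of $\partial[abcd]$ your $W^h=A_1x_1+\Phi(x_2)$ and its gradient do not agree with $W$ and $DW$, violating the required boundary conditions. Your proposed fix --- a cutoff ``outside $B_h(a)\cup B_h(c)$'' contributing area $\lesssim h\ell$ --- miscounts: the mismatch region has size $\sim\ell\times\ell=h^{2/3}l^{4/3}$, and a corrector there carrying $O(1)$ membrane strain costs $\sim h^{2/3}l^{4/3}\gg h^{5/3}l^{1/3}$. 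Forcing $W^h$ back to the gradient-discontinuous $W$ near the tips while staying $C^2$ requires the smoothing width to shrink as one approaches $a$ and $c$. This is the missing idea: the paper (following Conti--Maggi) uses a \emph{variable} width $f(x_1)\sim\tau\bigl(h^{1/3}(h+x_1)^{2/3}-h\bigr)$ with $f(x_1)\le\tau x_1$, so that the modified region stays strictly inside $[abcd]$ and touches the boundary only at $a,c$. Once $f$ varies, $W^h$ is no longer a function of $x_2$ alone, $\det D^2W^h\neq 0$, and the membrane strain cannot be cancelled exactly; the residual $11$-component is $O(f'^2+ff'')$, and it is the balance $\int f(f'^4+f^2f''^2)\,dx_1$ against the bending $\int h^2 f^{-1}\,dx_1$ that actually produces the $h^{5/3}l^{1/3}$ scaling. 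Your energy accounting (zero membrane energy inside the strip) omits this term entirely, and the ``carefully chosen internal corrector'' that is supposed to fix the tips is left unspecified even though it would have to be, in effect, the whole tapered-ridge construction.

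To your credit, the part of the plan concerning the cross-sectional profile is sound and coincides with the paper's mechanism: treating $\int_{-\ell}^{\ell}(\Phi')^2$ as a free parameter and tuning it so that the in-plane displacements match across the two edges of the smoothed region is exactly the paper's normalization $\int_{-1}^{1}\gamma_3'^2=A_7^2+A_8^2$ (Lemma \ref{lem:gamma}, leading to $\omega(1)=0$ in \eqref{eq:1}), and you correctly identify this as the ingredient that removes the fold-angle restriction of \cite{BedrossianKohn2015}. But without the variable-width taper and the associated membrane estimate, the argument does not close.
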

\begin{remark}
\label{rem:bedrkohn}
  The lemma is an improvement of a construction by Bedrossian and Kohn
  \cite{BedrossianKohn2015}, which only works for small $|DW|$ (small
  fold angle). In both our work and \cite{BedrossianKohn2015}, the proof consists of
  an adaptation of the geometrically fully nonlinear construction by the first
  author and Maggi \cite{Maggi08}.
We note that using our Lemma \ref{lemma:ridge}, Theorem 1.4 in 
\cite{BedrossianKohn2015} can be improved: the restriction on the misfit between
substrate and membrane -- assumption (1.17) in \cite{BedrossianKohn2015} -- can be omitted.
\end{remark}

Before we come to the proof, we  recall very briefly the general idea for the
geometrically fully nonlinear construction from
\cite{Maggi08}. Our proof is simply an adaptation of theirs to the von
K\'arm\'an setting.

Let us assume that the gradient discontinuity of the sharp fold is located on the set $\{(x,0):0\leq
x\leq l\}$, and that the sharp fold is given by 
\[
u^*(x_1,x_2)=\begin{cases}(x_1,x_2\cos\varphi_1,x_2\sin\varphi_1)&\text{ if }    x_2\geq 0\\
(x_1,x_2\cos\varphi_2,x_2\sin\varphi_2)&\text{ if }
    x_2< 0\,,\end{cases}
\]
with $\varphi_1\neq \varphi_2$. Note that $u^*$ is a Lipschitz isometry.
The first idea is to note that
the sharp fold $u^*$ can be approximated by a one-dimensional
construction. We have 
\begin{equation}
u^*(x_1,x_2)=x_1e_1+\gamma(x_2)\,,\label{eq:7}
\end{equation}
where $\gamma$ is a piecewise
linear curve. Approximations of $\gamma$ can be obtained by a suitable
smoothing, and these approximations $\tilde \gamma$ will result in a deformation of finite
energy when reinserted into \eqref{eq:7}. 
The second idea is to vary the width of the smoothed fold along its
length $l$: We  modify the sharp fold only in
the region $-f(x_1)\leq x_2\leq f(x_1)$, where the  function
$f:[0,l]\to[0,\infty)$ is chosen appropriately. This means, of course, that we
have to vary the curve $\tilde \gamma$ with $x_1$. This is achieved by a
suitable $x_1$-dependent rescaling of the argument of $\tilde \gamma$. Finally,
one has to  allow  for a correction $\beta(x_1,x_2)$ of the first component
of $u^*$, that vanishes for $|x_2|\geq f(x_1)$. All these ingredients are also
found in our ansatz for the von K\'arm\'an setting, equation \eqref{eq:8}
below. Note that due to the lack of full invariance under $SO(3)$ in the von
K\'arm\'an case, we cannot
assume that the third component $W_h$ of the deformation vanishes along the
discontinuity set of the gradient of the sharp fold, as we did for the fully
nonlinear case above. This somewhat obfuscates
the underlying idea in \eqref{eq:8}, and the reader is advised to check for her-
or himself that
\eqref{eq:8} is indeed nothing but the partially linearized version of the
ansatz we have just described.

\begin{proof}[Proof of Lemma \ref{lemma:ridge}]
We introduce the notation $[abcd]_h=[abcd]\setminus (B_h(a) \cup B_h(c))$.
By suitable translations and rotations of the domain (i.e., by making the replacements $V(x)\to
R^TV(Rx+z)$ and $W(x)\to W(Rx+z)$ for suitable $R\in SO(2)$ and $z\in \R^2$),  we may assume $a=0$ and
$c=(l,0)$. By adding suitable constants, we may  assume
$V(0)=W(0)=0$. Finally, by making the replacement $V(x)\to V(x)+\lambda x^\bot $
for suitable $\lambda\in\R$,
we may assume  $V_{2,1}=0$. All of these changes do not affect the energy density.\\
\\
Hence, $V,W$ are affine on the intersection of $[abcd]_h$ with the upper and
lower half-plane respectively, and there exist  $A_1,\dots,A_8\in\R$ such
that
\[
\begin{split}
  V(x_1,x_2)=&\begin{cases}(A_1x_1+A_2x_2,A_3x_2) &\text{ if }x_2\geq 0\\
    (A_1 x_1+A_4 x_2,A_5x_2)&\text{ if }x_2< 0\end{cases}\\
  W(x_1,x_2)=&\begin{cases} A_6x_1+A_7x_2&\text{ if }x_2\geq 0\\
    A_6 x_1+A_8x_2&\text{ if }x_2< 0\end{cases}\,.
\end{split}
\]
Additionally, from $DV+DV^T+DW\otimes DW=0$ we have the relations

\begin{equation}
2A_1+A_6^2=A_2+A_6A_7=A_4+A_6A_8=2A_3+A_7^2=2A_5+A_8^2=0\,.\label{eq:4}
\end{equation}
Now let $\tau\in(0,1]$ be the largest number such that
$b$ and $d$ are outside the rhombus with $ac$ as diagonal, and sides
with slope $\pm\tau$. This condition means that (assuming $d_2<0<b_2$ to fix ideas)
one has
$b_2\ge \tau b_1$, $b_2\ge \tau (l-b_1)$, 
$d_2\le -\tau d_1$, $d_2\le -\tau (l-d_1)$. 

Since we have assumed that $h \in (0,l/8)$, we may apply Lemma 2.4 in \cite{Maggi08} to 
  choose $f\in
C^\infty([0,l];[0,\infty))$  
such that $f(x)=f(l-x)$ for $x\in[0,l]$, and such that $f$ and its
first two derivatives are controlled on $(0,l/2)$ by
\[
f_0(x)=\tau h^{1/3}(h+x)^{2/3}-\tau h\,,
\]
in the sense that there exists $C>0$ that is independent of $h$ and  $\tau$ such
that on $(0,l/2)$, we have
\begin{equation}
C^{-1}|f_0|\leq |f|\leq |f_0|,\quad |f'|\leq C|f_0'|,\quad|f''|\leq C|f_0''|\,.\label{eq:5}
\end{equation}

Now we choose  $\gamma_2,\gamma_3\in C^\infty([-1,1])$, that only depend on $A_3,A_5,A_7,A_8$,
with the following properties:
\begin{equation}
\begin{split}
  \gamma_2(t)=&\begin{cases} A_3 t &\text{ if }t\geq\frac34\\
    A_5 t&\text{ if }t\leq-\frac34\end{cases}\\
  \gamma_3(t)=&\begin{cases} A_7 t &\text{ if }t\geq\frac34\\
    A_8 t&\text{ if }t\leq-\frac34\end{cases}\\
2\gamma_2'+\gamma_3'^2=&0\,.
\end{split}\label{eq:3}
\end{equation}
Such a choice of $\gamma_2, \gamma_3$ is possible by Lemma \ref{lem:gamma} below.
For $t\in [-1,1]$, we define the following auxiliary objects,
\[
\eta_i(t)=\gamma_i(t)-t\gamma_i'(t)\,\quad\text{ for }i=2,3\,,
\]
and 
\[
\begin{split}
  \omega(t)=&\int_{-1}^t \eta_2(s)+\gamma_3'(s)\eta_3(s)\d s\\
  \xi(t)=&t\omega'(t)-\omega(t)\,.
\end{split}
\]
In particular, $\eta=\omega=\xi=0$ for $|s|\ge 3/4$. Furthermore, we compute
\begin{equation}
\begin{split}
  \omega(1)=&\int_{-1}^1
  \gamma_2(s)-s\gamma_2'(s)+\gamma_3(s)\gamma_3'(s)-s\gamma_3'^2(s)\d s\\
  =& \int_{-1}^1
  \gamma_2(s)+s\gamma_2'(s)+\gamma_3(s)\gamma_3'(s)\d s\\
   =&\gamma_2(1)+\gamma_2(-1)+\frac12\left(\gamma_3^2(1)-\gamma_3^2(-1)\right)\\
  =&A_3-A_5+\frac12(A_7^2-A_8^2)\\
  =&0\,.
\end{split}\label{eq:1}
\end{equation}
As a further auxiliary function, we define
\[
\beta(x_1,x_2)=\begin{cases}-f(x_1)f'(x_1)\omega(x_2/f(x_1))&\text{ if
  }-f(x_1)\leq x_2\leq f(x_1)\\ 0&\text{ else.}\end{cases}
\]
We are ready to define $(V^h,W^h):[abcd]\to \R^2\times \R$. We set

\begin{equation}
\begin{array}{rll}
  V^h_1(x_1,x_2)=& \left\{\begin{array}{l} 
A_1x_1-A_6
    f(x_1)\gamma_3\left(\frac{x_2}{f(x_1)}\right)+\beta(x_1,x_2)\\ V_1(x_1,x_2)\end{array}\right.&\begin{array}{l} \text{ for }-f(x_1)\leq x_2\leq
    f(x_1)\\ \text{ else}\end{array}\\
V^h_2(x_1,x_2)=& \left\{\begin{array}{l} f(x_1)\gamma_2\left(\frac{x_2}{f(x_1)}\right)\\ V_2(x_1,x_2)\end{array}\right.&\begin{array}{l} \text{ for
  }-f(x_1)\leq x_2\leq   f(x_1)\\\text{ else}\end{array}\\
  W^h(x_1,x_2)=&\left\{\begin{array}{l} A_6 x_1 +f(x_1)\gamma_3\left(\frac{x_2}{f(x_1)}\right)\\ W(x_1,x_2)\end{array}\right.&\begin{array}{l} \text{ for
    }-f(x_1)\leq x_2\leq   f(x_1)\\ \text{ else.}\end{array}
\end{array}\label{eq:8}
\end{equation}
Since $f(x)\le f_0(x)\le \tau x$, by the choice of $\tau$ the required boundary data are satisfied.
By \eqref{eq:4}, \eqref{eq:5}, \eqref{eq:3}, and \eqref{eq:1}, $V^h$ and $W^h$ are indeed in
$C^2([abcd]\setminus \{a,c\};\R^2)$ and $C^2([abcd]\setminus \{a,c\})$
respectively, and  $|DV^h|+|DW^h|\leq C$.
We now compute the components of $DV^h+(DV^h)^T+DW^h\otimes DW^h$. For
$|x_2|>f(x_1)$, they all identically vanish, while for $|x_2|\leq f(x_1)$, we have
\[
\begin{split}
  2V^h_{1,1}+(W^h_{,1})^2=&2(A_1-A_6f'\eta_3+\beta_{,1})+(A_6+f'\eta_3)^2\\
  =&2(f'^2\xi-ff''\omega)+f'^2\eta_3^2\\
  V^h_{1,2}+V^h_{2,1}+W^h_{,1}W^h_{,2}=&
  -A_6\gamma_3'-f'(\eta_2+\gamma_3'\eta_3)+f'\eta_2+(A_6+f'\eta_3)\gamma_3'\\
  =&0\\
  2V^h_{2,2}+(W^h_{,2})^2=&2\gamma_2'+\gamma_3'^2\\
 =&0\,.
\end{split}
\]
Here, the functions
$\omega,\xi,\gamma_i,\eta_i$, $i=2,3$, and their derivatives have  been
evaluated at $x_2/f(x_1)$.

We come to the estimate of  the membrane energy density. We introduce  the notation $h'=h/\sqrt{2}$ and note that,
since $\tau\le 1$,
\[
\begin{split}
  \{(x_1,x_2):0\leq x_1\leq& l, |x_2|\leq f(x_1)\}\setminus (B_h(a)\cup
  B_h(c))\\
&\subset \{(x_1,x_2):h'\leq x_1\leq l-h', |x_2|\leq f(x_1)\}.
\end{split}
\]
Thus, we may estimate 
\[
\begin{split}
  \int_{[abcd]_h}&|DV^h+(DV^h)^T+DW^h\otimes DW^h|^2\d x\\
  \lesssim& \int_{h'}^{l-h'}\d x_1
  \int_{-f(x_1)}^{f(x_1)}\d x_2
  \Big(f'^4(x_1)\xi^2(x_2/f)+f^2(x_1)f''^2(x_1)\omega^2(x_2/f)\\
&+f'^4(x_1)\eta_3^4(x_2/f)\Big)\,,
\end{split}
\]
where we also used the elementary inequality $(z_1+z_2+z_3)^2\lesssim
z_1^2+z_2^2+z_3^2$. 
By the change of variables $t=x_2/f(x_1)$, we obtain
\[\begin{split}
\int_{[abcd]_h}&|DV^h+(DV^h)^T+DW^h\otimes DW^h|^2\d x\\
  \lesssim
   &\int_{h'}^{l-h'}\d x_1 \Bigg[\left(f'^4(x_1)f(x_1)\int_{-1}^1\xi^2(t)\d
    t\right)+\left(f^2(x_1)f''^2(x_1)f(x_1)\int_{-1}^1\omega^2(t)\d
    t\right)\\
&+ \left(f'(x_1)^4f(x_1)\int_{-1}^1\eta_3^4(t)\d t\right)\Bigg]\\
\lesssim &\int_{h'}^{l-h'} f(f'^4+f^2f''^2)\d x_1\\
  \lesssim &h^{5/3}\int_h^{l/2} \d x_1 (x_1+h)^{-2/3}\\
  \lesssim &h^{5/3}l^{1/3}\,.
\end{split}
\]
Here, we used the bounds
\eqref{eq:5} to obtain the next to last estimate.
We come to the computation of the bending energy. We have
\[
D^2W^h(x_1,x_2)=\left(\begin{array}{cc}f''\eta_3-f^{-2}\eta_3'x_2f'^2 &
    f^{-1}f'\eta_3'\\f^{-1}f'\eta_3' & f^{-1}\gamma_3''\end{array}\right)\,,
\]
where again the functions
$\gamma_i,\eta_i$, $i=2,3$, and their derivatives have  been
evaluated at $x_2/f(x_1)$. 

This yields
\[
\begin{split}
  \int_{[abcd]_h}&|D^2W^h|^2\d x\\
\lesssim & \int_{h'}^{l-h'} \d x_1\int_{-f(x_1)}^{f(x_1)}\d
  x_2
  \Big(f''^2(x_1)\eta_3^2(x_2/f)
+f^{-4}(x_1)f'^4(x_1)x_2^2\eta_3'^2(x_2/f)\\
&+f^{-2}(x_1)f'^2(x_1)\eta_3'^2(x_2/f)+f^{-2}(x_1)\gamma_3''^2(x_2/f)\Big)\,.
\end{split}
\]
Again using the change of variables $t=x_2/f(x_1)$, we have
\[
\begin{split}
 \int_{[abcd]_h}&|D^2W^h|^2\d x\\
\lesssim &\int_{h'}^{l-h'} \d x_1\Bigg[
\left(f''^2(x_1)f(x_1)\int_{-1}^1\eta_3^2(t)\d
  t\right)
+\left(f^{-1}(x_1)f'^4(x_1)\int_{-1}^1t^2\eta_3'^2(t)\d t\right)\\
&+\left(f^{-1}(x_1)f'^2(x_1)\int_{-1}^1\eta_3'^2(t)\d t\right)
+\left(f^{-1}(x_1)\int_{-1}^1\gamma_3''^2(t)\d t\right)\Bigg]\,.
\end{split}
\]
Using the bounds \eqref{eq:5}, we obtain
\[
\begin{split}
  \int_{[abcd]_h}|D^2W^h|^2\d x\lesssim & \int_{h}^{l/2}\left(\frac{h}{(x_1+h)^{2}}+\frac{h^{1/3}}{(x_1+h)^{4/3}}+\frac{h^{-1/3}}{(x_1+h)^{2/3}}\right)\d x_1\\
    \lesssim & 1+ h^{-1/3} l^{1/3}\,.
  \end{split}
\]
Combining the estimates for membrane and bending energy, we obtain
\[
\bar E_h[V^h,W^h,[abcd]_h]\lesssim h^{5/3}l^{1/3}+h^2 \lesssim h^{5/3}l^{1/3}\,.
\]
This proves the lemma.
\end{proof}
\begin{lemma}
  \label{lem:gamma}
Given $A_7,A_8\in\R$, there exist $\gamma_2,\gamma_3\in C^\infty([-1,1])$ with
the following properties:
\[
\begin{split}
  \gamma_2(t)=&\begin{cases}-\frac12 A_7^2 t & \text{ if }t\geq \frac34\\
    -\frac12 A_8^2 t & \text{ if }t\leq -\frac34\end{cases}\\
  \gamma_3(t)=&\begin{cases}A_7 t & \text{ if }t\geq \frac34\\
    A_8 t & \text{ if }t\leq -\frac34\end{cases}\\
2\gamma_2'+\gamma_3'^2=&0\,.
\end{split}
\]
\end{lemma}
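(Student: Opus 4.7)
My plan is to first solve for $\gamma_2$ explicitly in terms of $\gamma_3$, then reduce the problem to constructing $\gamma_3$ with a prescribed value of $\|\gamma_3'\|_{L^2}^2$.

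\textbf{Reduction.} The constraint $2\gamma_2' + \gamma_3'^2 = 0$ forces
\[
\gamma_2(t) = \frac{1}{2}A_8^2 - \frac{1}{2}\int_{-1}^{t} \gamma_3'(s)^2 \, ds,
\]
where the additive constant $\frac12 A_8^2$ is chosen so that $\gamma_2$ matches $-\frac{1}{2}A_8^2 t$ for $t\le -3/4$, using that $\gamma_3'(t)=A_8$ there. The derivative of $\gamma_2$ automatically matches $-\frac{1}{2}A_7^2$ for $t \geq 3/4$. The only remaining requirement is that $\gamma_2(3/4) = -\frac{3}{8}A_7^2$, and a direct computation shows this is equivalent to
\begin{equation}
\int_{-3/4}^{3/4} \gamma_3'(t)^2 \, dt = \frac{3}{4}\bigl(A_7^2 + A_8^2\bigr). \label{eq:L2norm}
\end{equation}
Thus I only need $\gamma_3 \in C^\infty([-1,1])$ with the specified boundary behavior and with $\|\gamma_3'\|_{L^2(-3/4,3/4)}^2$ equal to the value in \eqref{eq:L2norm}.

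\textbf{Construction of $\gamma_3$.} Pick a smooth cutoff $\chi \in C^\infty([-1,1])$ that is odd on $[-3/4,3/4]$, equals $1$ for $t \ge 3/4$, and equals $-1$ for $t \le -3/4$. Set
\[
\phi_0(t) = \frac{A_7+A_8}{2} + \frac{A_7-A_8}{2}\chi(t),
\]
so that $\phi_0 \equiv A_7$ on $[3/4,1]$ and $\phi_0 \equiv A_8$ on $[-1,-3/4]$, and by oddness of $\chi$,
\[
\int_{-3/4}^{3/4}\phi_0 \, dt = \frac{3}{4}(A_7+A_8).
\]
Fix any nonzero $\psi \in C_c^\infty((-3/4,3/4))$ with $\int \psi \, dt = 0$, and consider the one-parameter family $\phi_a = \phi_0 + a\psi$. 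For each $a\in\R$, $\phi_a$ is a smooth function satisfying the same boundary behavior and having the same integral as $\phi_0$. A short computation and a Cauchy-Schwarz bound give
\[
\int_{-3/4}^{3/4}\phi_0^2 \, dt \le \frac{3}{4}(A_7^2 + A_8^2),
\]
with the inequality coming from $\int \chi^2 \le 3/2$; meanwhile $\int \phi_a^2 \, dt$ is a nondegenerate quadratic in $a$ that tends to $+\infty$. By continuity in $a$, there exists $a_*\in\R$ with $\int_{-3/4}^{3/4}\phi_{a_*}^2 \, dt = \frac{3}{4}(A_7^2+A_8^2)$.

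\textbf{Finishing.} Define $\gamma_3(t) = -A_8 + \int_{-1}^t \phi_{a_*}(s)\,ds$ and define $\gamma_2$ by the formula in the reduction step. Then $\gamma_3$ is smooth, equals $A_8 t$ for $t \le -3/4$ and $A_7 t$ for $t \ge 3/4$ (the latter using $\int_{-3/4}^{3/4}\phi_0 \, dt = \frac{3}{4}(A_7+A_8)$ and $\int\psi=0$), and $\gamma_2$ satisfies all its required conditions thanks to \eqref{eq:L2norm}. The constraint $2\gamma_2'+\gamma_3'^2=0$ holds by construction.

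The only real obstacle is verifying that the target value $\frac{3}{4}(A_7^2+A_8^2)$ in \eqref{eq:L2norm} is at least as large as the minimum possible value of $\|\gamma_3'\|_{L^2(-3/4,3/4)}^2$ over admissible $\gamma_3$; by Cauchy-Schwarz this minimum is $\frac{3}{8}(A_7+A_8)^2$, and the inequality $(A_7+A_8)^2 \le 2(A_7^2+A_8^2)$ ensures this holds for all $A_7, A_8$.
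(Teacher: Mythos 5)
Your proof is correct and follows essentially the same strategy as the paper's: reduce to prescribing $\|\gamma_3'\|_{L^2}^2$, construct a smooth interpolating profile for $\gamma_3'$, tune the $L^2$ norm with a one-parameter compactly supported perturbation via the intermediate value theorem, and define $\gamma_2$ by integrating $-\tfrac12\gamma_3'^2$. The only cosmetic differences are that the paper obtains the profile by mollifying the piecewise-linear $\tilde\gamma_3$ and perturbs by $\lambda\varphi$ (equivalently, perturbs $\gamma_3'$ by the mean-zero function $\lambda\varphi'$), and normalizes the integral constraint over $[-1,1]$ rather than $[-3/4,3/4]$.
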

\begin{proof}
 This is very similar to Lemma 2.2 in \cite{Maggi08}, and we will be brief. Define
 $\tilde \gamma_3:[-1,1]\to\R$ by
\[
\begin{split}
\tilde  \gamma_3(t)=&\begin{cases}A_7 t & \text{ if }t\geq 0\\
    A_8 t & \text{ if }t<0\end{cases}\,.
\end{split}
\]
Now let $\varphi\in C^\infty_0(\R)$ be a  mollifier with $\int_\R\varphi\,dx=1$,
$\supp\varphi\subset (-1/8,1/8)$. We let  $\gamma_{3,\lambda}=\varphi*\tilde
\gamma_3+\lambda \varphi$. By continuity there is a  choice of $\lambda_0$ such that 
$\int_{-1}^{1}\gamma_{3,\lambda_0}'^2(s)\d s=A_7^2+A_8^2$. Then we set
$\gamma_3=\gamma_{3,\lambda_0}$ and
\[
\gamma_2(t)=\frac12 A_8^2-\frac12\int_{-1}^t\gamma_3'^2(s)\d s\,.
\] 
This choice of
$\gamma_2,\gamma_3$ fulfills all the required properties.
\end{proof}

\appendix
\section{}
\label{sec:appendix}

\subsection{The space of radially symmetric configurations}

The following lemma clarifies the definition of the space $A_\delta$ from (\ref{eq:Adelta}). Recall the change of variables $(u,w)\leftrightarrow (U,W)$ given in (\ref{eq:changeofvar}), and define the set
\[
X=\{(u,w):(U,W)\in W^{1,2}(B_{1};\mathbb{R}^{2})\times W^{2,2}(B_{1})\}.
\]

\begin{lemma}\label{lem:radsymmspace}
We have the equivalence
\[
\begin{split}X=\{(u,w):&u\in L^{2}((0,1),r^{-1}dr),u'\in L^{2}((0,1),rdr),\\
&w\in L^{2}((0,1),rdr),w'\in L^{2}((0,1),r^{-1}dr),w''\in L^{2}((0,1),rdr)\}
\end{split}
\]
where the derivatives are understood in the sense of distributions
on $(0,1)$. Moreover, if $(u,w)\in X$ then $w\in W^{1,2}((0,1))$.
In particular, $w$ has well-defined trace at $r=0,1$. Finally, $w'$
has a unique continuous representative on $(0,1)$, which extends
continuously to $(0,1]$.\end{lemma}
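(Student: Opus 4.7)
The plan is to verify the set equality by a direct change-of-variables computation in polar coordinates, and then to deduce the additional regularity of $w$ and $w'$ from standard facts about one-dimensional Sobolev functions. There is essentially only one mildly subtle point, namely the identification of the distributional derivatives of $U$ and $W$ on $B_1$ with their formal radial expressions at the origin.

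For the main computation I would write $U(x) = \phi(|x|)\,x/|x|$ with $\phi(r) = (u(r)-r)/2$ and observe the pointwise decomposition
\[
DU = \phi'(r)\,A(x) + r^{-1}\phi(r)\,B(x),\qquad A_{ij}=\tfrac{x_i x_j}{r^2},\ B_{ij}=\delta_{ij}-A_{ij},
\]
in which $A$ and $B$ are mutually orthogonal rank-one projectors. Hence $|DU|^2 = (\phi')^2 + (\phi/r)^2$, and an analogous computation gives $|\nabla W|^2 = (w')^2$ and $|D^2 W|^2 = (w'')^2 + (w'/r)^2$. Integrating in polar coordinates and using that $u = 2\phi + r$ immediately matches the five weighted integrability conditions on $u,u',w,w',w''$ stated in the lemma with finiteness of $\|U\|_{W^{1,2}}$ and $\|W\|_{W^{2,2}}$. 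Note that $u\in L^2((0,1),r^{-1}dr)$ automatically yields $u\in L^2((0,1),r\,dr)$, so the $L^2$-norm of $U$ itself requires no separate assumption.

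For the one-dimensional regularity of $w$, observe that $r^{-1}\ge 1$ on $(0,1)$, so $w'\in L^2((0,1),dr)$. By Cauchy--Schwarz,
\[
\int_s^t |w'(r)|\,dr \le \Bigl(\int_s^t \tfrac{(w')^2}{r}\,dr\Bigr)^{1/2}\Bigl(\int_s^t r\,dr\Bigr)^{1/2},
\]
which tends to $0$ as $s\to 0^+$ uniformly in $t\in(0,1]$. Hence the continuous representative of $w$ extends uniformly continuously to $[0,1]$, giving $w\in L^\infty\cap W^{1,2}((0,1))$ with well-defined traces at both endpoints. For the continuous representative of $w'$, fix $\epsilon>0$: since $r\ge\epsilon$ there, $w''\in L^2((0,1),r\,dr)$ implies $w''\in L^2((\epsilon,1))$, so $w'\in W^{1,2}((\epsilon,1))$ has a representative continuous on $[\epsilon,1]$. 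These representatives agree on overlaps and assemble into a unique representative continuous on $(0,1]$.

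The one place requiring care is the passage from the radial formulas, which define $U,W$ only on $B_1\setminus\{0\}$ and give formal derivatives there, to genuine distributional derivatives on all of $B_1$. I would handle this by approximation: mollify $u,w$ in the radial variable on $(0,1]$, lift to smooth radial functions on $B_1\setminus\{0\}$ (which extend trivially across $0$ once $\phi/r$ and $w'/r$ are controlled), and observe that convergence in the weighted radial spaces lifts to $W^{1,2}\times W^{2,2}$ convergence on $B_1$ by the norm identities already derived. The formal derivatives are thereby identified as the distributional ones; equivalently, one can invoke vanishing $W^{1,2}$-capacity of the singleton $\{0\}\subset\R^2$ to rule out any concentrated correction.
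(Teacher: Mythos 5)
Your proposal is correct and follows essentially the same route as the paper: the identical polar-coordinate decomposition of $DU$, $DW$, $D^2W$ into orthogonal radial/angular projections, the observation that the origin is removable (the paper invokes $W^{1,2}(B_1)=W^{1,2}(B_1\setminus\{0\})$, i.e.\ zero capacity of a point, which is your second suggested fix), and the same one-dimensional Sobolev arguments for the trace of $w$ and the continuity of $w'$ on $(0,1]$. The only cosmetic point is that your Cauchy--Schwarz bound controls $\int_s^t|w'|$ as $s,t\to 0$ jointly (a Cauchy criterion at $r=0$), not uniformly over all $t\in(0,1]$ as literally phrased; the paper sidesteps this by noting $w'\in L^2((0,1),dr)$ directly, so $w\in W^{1,2}((0,1))$ and the traces exist.
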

\begin{proof}
First, we note the elementary fact that $W^{1,2}(B_{1})=W^{1,2}(B_{1}\backslash\{0\})$.
Now, we compute for $x\in B_{1}\backslash\{0\}$ that 
\begin{align*}
DU(x) & =\frac{1}{2}(u'(|x|)-1)\hat{x}\otimes\hat{x}+\frac{1}{2}(\frac{u(|x|)}{|x|}-1)\hat{x}^{\perp}\otimes\hat{x}^{\perp}\\
DW(x) & =w'(|x|)\hat{x}\\
D^{2}W(x) & =w''(|x|)\hat{x}\otimes\hat{x}+\frac{w'(|x|)}{|x|}\hat{x}^{\perp}\otimes\hat{x}^{\perp}.
\end{align*}
Thus, $U\in W^{1,2}(B_{1};\mathbb{R}^{2})$ if and only if $u\in L^{2}((0,1),rdr)\cap L^{2}((0,1),r^{-1}dr)$
and $u'\in L^{2}((0,1),rdr)$. Also, $W\in W^{2,2}(B_{1})$ if and
only if $w\in L^{2}((0,1),rdr)$, $w'\in L^{2}((0,1),rdr)\cap L^{2}((0,1),r^{-1}dr)$,
and $w''\in L^{2}((0,1),rdr)$. This proves the required equivalence. 

The remaining assertions follow easily. To show that
$w\in W^{1,2}((0,1))$, note that by the equivalence, $w\in L_{loc}^{2}((0,1),dr)$
and $w'\in L^{2}((0,1),dr)$ so that $w\in L^{2}((0,1),dr)$. To show
that $w'$ has the asserted representative, note that $w'\in W^{1,2}((\delta,1))$
for all $\delta\in(0,1)$ so that the desired result follows from
a standard Sobolev embedding. 
\end{proof}

\subsection{A Gagliardo-Nirenberg type inequality}
We used the following interpolation inequality in the proof of the
lower bounds. In a slight abuse of notation, we will refer to $f\in W^{2,2}(\R)$ as both a Sobolev function and  as its unique continuous representative (the existence of which is ensured by a standard Sobolev embedding).
\begin{lemma}
\label{lem:interp_osc}Let $I\subset\R$ be a bounded interval and let $f\in W^{2,2}\left(I\right)$.
Then,
\begin{equation}
\osc{I}\left\{ f\right\} \lesssim\norm{f}_{L^{2}\left(I\right)}^{3/4}\norm{f''}_{L^{2}\left(I\right)}^{1/4}+\frac{1}{\abs{I}^{1/2}}\norm{f}_{L^{2}\left(I\right)}.\label{eq:interp_osc}
\end{equation}
\end{lemma}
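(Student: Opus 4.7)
The plan is to reduce to the zero-mean case and then compose two standard one-dimensional Gagliardo--Nirenberg interpolations. Since the oscillation is translation invariant, setting $g := f - \fint_I f$ gives $\osc{I}\{f\} = \osc{I}\{g\} \le 2\|g\|_{L^\infty(I)}$, while $\|g\|_{L^2(I)} \le \|f\|_{L^2(I)}$ and $g''=f''$. It therefore suffices to bound $\|g\|_{L^\infty(I)}$ by the right-hand side of \eqref{eq:interp_osc} with $f$ replaced by $g$.

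For the first Gagliardo--Nirenberg step I would exploit the zero-mean property of $g$: by continuity and $\int_I g = 0$ there is a point $x_0\in I$ with $g(x_0)=0$, and integrating the identity $\tfrac{d}{ds}g(s)^2 = 2g(s)g'(s)$ from $x_0$ to an arbitrary $y \in I$, followed by Cauchy--Schwarz, yields $\|g\|_{L^\infty(I)}^2 \le 2\|g\|_{L^2(I)}\|g'\|_{L^2(I)}$. Passing to the zero-mean reduction is essential here, since otherwise this estimate would acquire an extra lower-order term of size $|I|^{-1}\|g\|_{L^2}^2$ whose presence would spoil the scaling later.

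For the second step I would invoke the standard one-dimensional interpolation $\|g'\|_{L^2(I)}^2 \le C\bigl(\|g\|_{L^2(I)}\|g''\|_{L^2(I)} + |I|^{-2}\|g\|_{L^2(I)}^2\bigr)$, which is exactly the squared form of the Gagliardo--Nirenberg bound already used in the proof of Lemma~\ref{lem:well-cvg}. Substituting this into the estimate from the previous paragraph gives $\|g\|_{L^\infty(I)}^2 \le C\bigl(\|g\|_{L^2(I)}^{3/2}\|g''\|_{L^2(I)}^{1/2} + |I|^{-1}\|g\|_{L^2(I)}^2\bigr)$. Taking square roots via $\sqrt{a+b}\le \sqrt a + \sqrt b$ and then reinstating $\|g\|_{L^2(I)} \le \|f\|_{L^2(I)}$ and $g''=f''$ yields \eqref{eq:interp_osc} up to the factor of $2$ coming from $\osc{I}\{f\}\le 2\|g\|_{L^\infty(I)}$.

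There is no serious obstacle; the argument is a careful composition of two standard one-dimensional interpolation inequalities. The only real subtlety is the book-keeping of the lower-order $|I|^{-1/2}\|f\|_{L^2}$ piece: keeping every estimate in squared form until the very end and only then applying $\sqrt{a+b}\le \sqrt a +\sqrt b$ is what produces the correct exponent on the $\|f\|_{L^2}$ term, while the zero-mean reduction is what prevents the first Sobolev step from generating a second, uncontrolled copy of it.
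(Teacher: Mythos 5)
Your proposal is correct and follows essentially the same route as the paper's proof: reduction to the zero-mean case, the identity $\tfrac12 g^2(t)=\int_{x_0}^t gg'$ at a zero $x_0$ of $g$ combined with Cauchy--Schwarz, and the standard Gagliardo--Nirenberg bound for $\|g'\|_{L^2}$ in terms of $\|g\|_{L^2}$ and $\|g''\|_{L^2}$. The only cosmetic differences are that you keep the estimates in squared form until the end and pass from $\|g\|_{L^\infty}$ to the oscillation directly, whereas the paper applies the triangle inequality to $|f(t)-f(t')|$ at the last step.
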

\begin{proof}
By density we can assume $f\in C^{\infty}\left(I\right)$. Also, we
can assume that $\fint_{I}f\, dx=0$, and hence that there exists $x_{0}\in I$
such that $f\left(x_{0}\right)=0$. Let $t\in I$. Then by the fundamental
theorem of calculus,
\[
\frac{1}{2}f^{2}(t)=\int_{x_{0}}^{t}ff'\, dx\leq\norm{f}_{L^{2}\left(I\right)}\norm{f'}_{L^{2}\left(I\right)}.
\]
Using the Gagliardo-Nirenberg interpolation inequality
\[
\norm{f'}_{L^{2}\left(I\right)}\lesssim\norm{f}_{L^{2}\left(I\right)}^{1/2}\norm{f''}_{L^{2}\left(I\right)}^{1/2}+\frac{1}{I}\norm{f}_{L^{2}\left(I\right)},
\]
we conclude that 
\[
\abs{f\left(t\right)}\lesssim\norm{f}_{L^{2}\left(I\right)}^{3/4}\norm{f''}_{L^{2}\left(I\right)}^{1/4}+\frac{1}{\abs{I}^{1/2}}\norm{f}_{L^{2}\left(I\right)}
\]
for all $t\in I$. Letting $t,t'\in I$ and applying the triangle
inequality along with this estimate twice proves that
\[
\abs{f\left(t\right)-f\left(t'\right)}\lesssim\norm{f}_{L^{2}\left(I\right)}^{3/4}\norm{f''}_{L^{2}\left(I\right)}^{1/4}+\frac{1}{\abs{I}^{1/2}}\norm{f}_{L^{2}(I)}.
\]
Since $t,t'$ were arbitrary, the result follows.\end{proof}

\section*{Acknowledgments} 
This work was partially supported 
by the Deutsche Forschungsgemeinschaft through the Sonderforschungsbereich 1060 
{\sl ``The mathematics of emergent effects''}. I.T.\ was supported by a National Science Foundation Graduate Research Fellowship DGE-0813964; and National Science Foundation grants OISE-0967140 and DMS-1311833.

\bibliographystyle{amsplain-initials}
\bibliography{regular}

\providecommand{\bysame}{\leavevmode\hbox to3em{\hrulefill}\thinspace}
\providecommand{\MR}{\relax\ifhmode\unskip\space\fi MR }
% \MRhref is called by the amsart/book/proc definition of \MR.
\providecommand{\MRhref}[2]{%
  \href{http://www.ams.org/mathscinet-getitem?mr=#1}{#2}
}
\providecommand{\href}[2]{#2}
\begin{thebibliography}{10}

\bibitem{BedrossianKohn2015}
J.~Bedrossian and R.~V. Kohn, \emph{Blister patterns and energy minimization in
  compressed thin films on compliant substrates}, Comm. Pure Appl. Math.
  \textbf{68} (2015), 472--510.

\bibitem{BCDM00}
H.~{Ben Belgacem}, S.~Conti, A.~DeSimone, and S.~M\"uller, \emph{Rigorous
  bounds for the {F\"oppl-von K\'arm\'an} theory of isotropically compressed
  plates}, J. Nonlinear Sci. \textbf{10} (2000), 661--683.

\bibitem{BCDM02}
H.~Ben~Belgacem, S.~Conti, A.~DeSimone, and S.~M\"uller, \emph{Energy scaling
  of compressed elastic films}, Arch. Rat. Mech. Anal. \textbf{164} (2002),
  1--37.

\bibitem{BrandmanKohn2013}
J.~Brandman, R.~V. Kohn, and H.-M. Nguyen, \emph{Energy scaling laws for
  conically constrained thin elastic sheets}, J. Elasticity \textbf{113}
  (2013), 251--264.

\bibitem{CKO04}
R.~Choksi, R.~V. Kohn, and F.~Otto, \emph{Energy minimization and flux domain
  structure in the intermediate state of a type-{I} superconductor}, J.
  Nonlinear Sci. \textbf{14} (2004), 119--171.

\bibitem{CCKO08}
R.~Choksi, S.~Conti, R.~V. Kohn, and F.~Otto, \emph{Ground state energy scaling
  laws during the onset and destruction of the intermediate state in a type {I}
  superconductor}, Comm. Pure Appl. Math. \textbf{61} (2008), 595--626.

\bibitem{CKO99}
R.~Choksi, R.~V. Kohn, and F.~Otto, \emph{Domain branching in uniaxial
  ferromagnets: a scaling law for the minimum energy}, Comm. Math. Phys.
  \textbf{201} (1999), 61--79.

\bibitem{ciarlet1980justification}
P.~G. Ciarlet, \emph{A justification of the von {K}{\'a}rm{\'a}n equations},
  Archive for Rational Mechanics and Analysis \textbf{73} (1980), 349--389.

\bibitem{CiarletII}
P.~G. Ciarlet, \emph{Mathematical elasticity. {V}ol. {II}}, Studies in
  Mathematics and its Applications, vol.~27, North-Holland Publishing Co.,
  Amsterdam, 1997, Theory of plates.

\bibitem{Maggi08}
S.~Conti and F.~Maggi, \emph{Confining thin elastic sheets and folding paper},
  Arch. Ration. Mech. Anal. \textbf{187} (2008), 1--48.

\bibitem{dresselhaus1997future}
M.~S. Dresselhaus, \emph{Future directions in carbon science}, Annual review of
  materials science \textbf{27} (1997), 1--34.

\bibitem{FJM2002}
G.~Friesecke, R.~D. James, and S.~M{\"u}ller, \emph{A theorem on geometric
  rigidity and the derivation of nonlinear plate theory from three-dimensional
  elasticity}, Comm. Pure Appl. Math. \textbf{55} (2002), 1461--1506.

\bibitem{FJM2006}
\bysame, \emph{A hierarchy of plate models derived from nonlinear elasticity by
  {G}amma-convergence}, Arch. Ration. Mech. Anal. \textbf{180} (2006),
  183--236.

\bibitem{GE1994192}
M.~Ge and K.~Sattler, \emph{Observation of fullerene cones}, Chemical Physics
  Letters \textbf{220} (1994), 192 -- 196.

\bibitem{geim2009graphene}
A.~K. Geim, \emph{Graphene: status and prospects}, Science \textbf{324} (2009),
  1530--1534.

\bibitem{guo2011graphene}
S.~Guo and S.~Dong, \emph{Graphene nanosheet: synthesis, molecular engineering,
  thin film, hybrids, and energy and analytical applications}, Chemical Society
  Reviews \textbf{40} (2011), 2644--2672.

\bibitem{JinSternberg2}
W.~Jin and P.~Sternberg, \emph{Energy estimates of the {von K\'arm\'an} model
  of thin-film blistering}, J. Math. Phys. \textbf{42} (2001), 192--199.

\bibitem{PhysRevLett.93.255504}
S.~P. Jordan and V.~H. Crespi, \emph{Theory of carbon nanocones: Mechanical
  chiral inversion of a micron-scale three-dimensional object}, Phys. Rev.
  Lett. \textbf{93} (2004), 255504.

\bibitem{KM94}
R.~V. Kohn and S.~M{\"u}ller, \emph{Surface energy and microstructure in
  coherent phase transitions}, Comm. Pure Appl. Math. \textbf{47} (1994),
  405--435.

\bibitem{KM92}
R.~Kohn and S.~M\"{u}ller, \emph{Branching of twins near an austenite-twinned
  martensite interface}, Phil. Mag. A \textbf{66} (1992), 697--715.

\bibitem{krishnan1997graphitic}
A.~Krishnan, E.~Dujardin, M.~Treacy, J.~Hugdahl, S.~Lynum, and T.~Ebbesen,
  \emph{Graphitic cones and the nucleation of curved carbon surfaces}, Nature
  \textbf{388} (1997), 451--454.

\bibitem{LedretRaoult1995}
H.~Le~Dret and A.~Raoult, \emph{The nonlinear membrane model as variational
  limit of nonlinear three-dimensional elasticity}, J. Math. Pures Appl. (9)
  \textbf{74} (1995), 549--578.

\bibitem{lee2008measurement}
C.~Lee, X.~Wei, J.~W. Kysar, and J.~Hone, \emph{Measurement of the elastic
  properties and intrinsic strength of monolayer graphene}, Science
  \textbf{321} (2008), 385--388.

\bibitem{lu2009elastic}
Q.~Lu, M.~Arroyo, and R.~Huang, \emph{Elastic bending modulus of monolayer
  graphene}, Journal of Physics D: Applied Physics \textbf{42} (2009), 102002.

\bibitem{MuellerOlbermann2014}
S.~M{\"u}ller and H.~Olbermann, \emph{Conical singularities in thin elastic
  sheets}, Calc. Var. Partial Differential Equations \textbf{49} (2014),
  1177--1186.

\bibitem{olbermann2015energy}
H.~Olbermann, \emph{Energy scaling law for a single disclination in a thin
  elastic sheet}, 2015, arXiv preprint arXiv:1509.07378.

\bibitem{ruan2006crushing}
H.~Ruan, Z.~Gao, and T.~Yu, \emph{Crushing of thin-walled spheres and sphere
  arrays}, International journal of mechanical sciences \textbf{48} (2006),
  117--133.

\bibitem{Shenderova2001}
O.~A. Shenderova, B.~L. Lawson, D.~Areshkin, and D.~W. Brenner, \emph{Predicted
  structure and electronic properties of individual carbon nanocones and
  nanostructures assembled from nanocones}, Nanotechnology \textbf{12} (2001),
  191.

\bibitem{Venkataramani04}
S.~C. Venkataramani, \emph{Lower bounds for the energy in a crumpled elastic
  sheet---a minimal ridge}, Nonlinearity \textbf{17} (2004), 301--312.

\bibitem{RevModPhys.79.643}
T.~A. Witten, \emph{Stress focusing in elastic sheets}, Rev. Mod. Phys.
  \textbf{79} (2007), 643--675.

\bibitem{yakobson1996nanomechanics}
B.~I. Yakobson, C.~Brabec, and J.~Bernholc, \emph{Nanomechanics of carbon
  tubes: instabilities beyond linear response}, Physical Review Letters
  \textbf{76} (1996), 2511.

\end{thebibliography}

\end{document}